\documentclass{article}

\usepackage[english]{babel}
\usepackage{amsmath}
\usepackage{mathtools}
\usepackage{graphicx} 
\usepackage{amsthm}
\usepackage{fancyhdr}
\usepackage{amsfonts}
\usepackage{amssymb}
\usepackage{pdfpages}
\usepackage{musicography}
\usepackage{tikz}
\usepackage{url}
\usepackage{comment}
\usepackage[shortlabels]{enumitem}
\usepackage{comment}

\newtheorem{corollary}{Corollary}

\theoremstyle{definition}
\newtheorem*{remark}{Remark}

\newtheorem{prop}{Proposition}

\newtheorem{definition}{Definition}

\newtheorem{lemma}{Lemma}

\newtheorem{theorem}{Theorem}

\newcommand{\Der}[2][]{\frac{\partial#1}{\partial#2}}
\newcommand{\der}[2][]{\frac{d#1}{d#2}}
\newcommand{\Norm}[1]{\Vert#1\Vert}
\newcommand{\norm}[1]{\vert#1\vert}

\newcommand{\orthcomp}[1]{\langle #1 \rangle^{\bot}}

\newcommand{\wholdersp}[4]{P^{#1,#2,\alpha}_{\mu#3#4}}

\newcommand{\wholdernorm}[5]{\Norm{#1}_{\wholdersp{#2}{#3}{#4}{#5}}}

\title{Lagrangian mean curvature flow in the Kummer K3 surface}
\author{Carlos Alberto Ochoa Flores}

\begin{document}

\maketitle

\begin{abstract}

A major problem in geometric analysis is to understand the behaviour of the Lagrangian mean curvature flow. This has proved to be a challenging problem, in particular, not many explicit examples of flows that exist for all time and converge to a minimal Lagrangian are known. In this paper we construct new examples of Lagrangian mean curvature flows in the Kummer K3 surface that converge to special Lagrangian spheres. We reduce the construction to a scalar perturbation problem that can be solved by a fixed point argument. 

\end{abstract}

\section*{Introduction}

A long-standing question for Calabi--Yau manifolds has been to understand which Hamiltonian isotopy classes contain a special Lagrangian representative, this is of particular interest because of its potential applications in mirror symmetry. Special Lagrangians are minimal submanifolds, thus a natural approach to studying them is to use the mean curvature flow. In fact, the mean curvature flow in K\"ahler--Einstein manifolds preserves the Lagrangian condition \cite{smoczyk1996canonical}, leading to the notion of the Lagrangian mean curvature flow (LMCF). Thomas and Yau \cite{thomas2001special} conjectured the existence of a stability condition that guarantees the long-time existence of the flow and its convergence to a special Lagrangian. However, Neves proved that in Calabi--Yau surfaces, any Lagrangian is in the same Hamiltonian isotopy class as a Lagrangian manifold that develops singularities under the LMCF \cite{neves2013finite}. which provides strong evidence that an understanding of the possible singularities for the flow might be essential in the resolution of the Thomas--Yau conjecture. This motivated Joyce to update the conjecture by suggesting that there might be a connection between the behaviour of the flow and Bridgeland stability conditions on Fukaya categories \cite{joyce2015conjectures}, his proposal involves the construction of a flow with surgeries. 

Even though there has been some progress towards the understanding of the possible singularities, specially for Calabi--Yau surfaces \cite{szkelyhidi2026generic, lotay2022neck}, there is still a lot to be figured out. On the other hand, not many explicit examples of Lagrangian mean curvature flows that converge to a special Lagrangian are known. In certain cases, where symmetries are present, the problem can be reduced to understanding a modified curve shortening flow. This idea was used by Lotay and Oliveira \cite{lotay2020special} to understand the behaviour of the LMCF for circle invariant Lagrangians in ALE and ALF manifolds that arise from the Gibbons--Hawkins ansatz. They were able to prove a version of the Thomas--Yau conjecture in this case. In particular, they were able to construct several examples of LMCF in the Eguchi--Hanson space that converge smoothly to a special Lagrangian. 

A Calabi--Yau structure on a smooth $2n$-dimensional manifold $M$ is a triple $(J,\omega,\Omega)$, where $J$ is an integrable complex structure on $M$, $\omega$ is a K\"ahler form on $M$ that is compatible with $J$, and $\Omega$ is a nowhere vanishing holomorphic $n-$ form, that satisfies 
\begin{align*}
    \frac{\omega^n}{n!}=(-1)^{\frac{n(n-1)}{2}}\bigg(\frac{i}{2}\bigg)^n\Omega\wedge \overline{\Omega}. 
\end{align*}
Notice that these three objects determine a complex, a symplectic and a Riemannian structure in $M$.

If we restrict the holomorphic form of a Calabi--Yau manifold to a Lagrangian submanifold, then it is possible to see that it has unit norm. This allows us to prove the following:

\begin{prop}[See Proposition 2.4.5 in \cite{wood2020singularities}]
    Let $f:L\rightarrow (M, J,\omega,\Omega)$ be an oriented Lagrangian submanifold of a Calabi--Yau manifold. Then there exists a smooth function $\theta(f): L\rightarrow S^1$ such that
    \begin{align*}
        \Omega\vert_L=e^{i\theta(f)}vol_L.
    \end{align*}
\end{prop}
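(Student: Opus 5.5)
The plan is to prove first, pointwise, that $\norm{\Omega\vert_L}=1$ with respect to the induced metric on $L$. Then, since $L$ is oriented and $\Lambda^nT^*_pL$ is one-dimensional, $f^*\Omega = h\, vol_L$ for a unique smooth complex-valued function $h$ on $L$. Once we know $\norm{h}\equiv 1$, we may write $h=e^{i\theta(f)}$ with $\theta(f):L\to S^1=\mathbb{R}/2\pi\mathbb{Z}$, which is just the map $h:L\to\{z\in\mathbb{C}:\norm{z}=1\}$. It is smooth because $h=f^*\Omega(e_1,\dots,e_n)$ for any local oriented orthonormal frame. So the content of the statement is entirely the identity $\norm{h}=1$.

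To prove $\norm{h}=1$ I work at a single point $p\in L$ and reduce to linear algebra on $T_{f(p)}M$. This is a Hermitian vector space $(V,J,g,\omega)$ with a complex volume form $\Omega_p\in\Lambda^{n,0}V^*$. Choose an oriented $g$-orthonormal basis $e_1,\dots,e_n$ of $T_pL$. Because $L$ is Lagrangian, $g(e_j,Je_k)=\omega(e_j,e_k)=0$ for all $j,k$. Hence $e_1,\dots,e_n,Je_1,\dots,Je_n$ is a real orthonormal basis of $V$, and $e_1,\dots,e_n$ is a unitary basis of $(V,J)$ over $\mathbb{C}$. Let $\zeta^1,\dots,\zeta^n$ be the dual complex coordinates, so $\zeta^j=x^j+iy^j$ with $x^j,y^j$ dual to $e_j,Je_j$. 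In these coordinates $\omega=\frac{i}{2}\sum_j d\zeta^j\wedge d\bar\zeta^j=\sum_j dx^j\wedge dy^j$. Since $\Omega_p$ is of type $(n,0)$, we have $\Omega_p=c\, d\zeta^1\wedge\cdots\wedge d\zeta^n$ for some $c\in\mathbb{C}$. Restricting to $T_pL=\{y=0\}$ gives $\Omega_p\vert_{T_pL}=c\,dx^1\wedge\cdots\wedge dx^n=c\, vol_L$, so $h(p)=c$.

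It remains to compute $\norm{c}$ from the normalization $\frac{\omega^n}{n!}=(-1)^{n(n-1)/2}(\frac{i}{2})^n\Omega\wedge\overline\Omega$. The left side is $dx^1\wedge dy^1\wedge\cdots\wedge dx^n\wedge dy^n$. On the right side, $d\zeta^1\wedge\cdots\wedge d\zeta^n\wedge d\bar\zeta^1\wedge\cdots\wedge d\bar\zeta^n=(-1)^{n(n-1)/2}\prod_j(d\zeta^j\wedge d\bar\zeta^j)$, and each factor satisfies $d\zeta^j\wedge d\bar\zeta^j=-2i\,dx^j\wedge dy^j$. Combining these, the right side becomes $\norm{c}^2(\frac{i}{2})^n(-2i)^n\,dx^1\wedge dy^1\wedge\cdots=\norm{c}^2\,dx^1\wedge dy^1\wedge\cdots$, because the two signs $(-1)^{n(n-1)/2}$ cancel. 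Therefore $\norm{c}=1$.

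The main (mild) obstacle is the sign and constant bookkeeping in this reordering. Its purpose is precisely to confirm that the paper's normalization constant is the one that forces $\norm{c}=1$ rather than some other positive constant.
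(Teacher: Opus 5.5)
Your proposal is correct and follows the route the paper indicates: the paper defers to Wood's thesis, noting only that the restriction of $\Omega$ to a Lagrangian has unit norm, after which $\theta(f)$ is taken to be its phase. Your adapted unitary frame and the sign bookkeeping, which show that the normalization $\frac{\omega^n}{n!}=(-1)^{n(n-1)/2}(\frac{i}{2})^n\Omega\wedge\overline{\Omega}$ forces $\norm{c}=1$, supply exactly the content of that remark, and the computation checks out.
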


We call $\theta$ the Lagrangian angle of $L$. Notice that the differential of $\theta$ is a well defined $1$-form on $L$. We say that the Lagrangian $f:L\rightarrow (M,J,\omega,\Omega)$ is zero-Maslov if its Lagrangian angle can be lifted to a real valued function. In this case we will also denote said function by $\theta(f)$. 

We say that the Lagrangian $f:L\rightarrow(M,J,\omega,\Omega)$ is a Special Lagrangian if 
\begin{align*}
    f^*Im\Omega=0.
\end{align*}
Notice that in this case we can choose $\theta(f)$ to be constant. Moreover, in this case $Re\Omega$ is a calibration for $f$; in particular $f$ is a minimal Lagrangian.  

\begin{remark}
    From now on, we will refer to a smooth, closed, oriented Lagrangian simply as a Lagrangian. 
\end{remark}

Motivated by the fact that the Eguchi--Hanson space models the curvature concentration region of the Kummer K3 surface, Lotay and Oliveira conjectured that one should be able to perturb the examples they constructed in \cite{lotay2020special} to obtain LMCF's in the Kummer K3 surface that exist for all time, and converge to a special Lagrangian. The main objective of this paper is to show that this can indeed be done when the original flow is graphical over the special Lagrangian (see Definition \ref{graphicalLMCF}). Informally, our main result says the following (see Theorem \ref{Maintheorem1Tech} for a detailed statement).

\begin{theorem}\label{Maintheo1Informal}
    A Lagrangian mean curvature flow in the Eguchi--Hanson space, that is graphical over the zero section and that converges smoothly to it, can be perturbed to obtain a flow in the Kummer K3 surface that converges smoothly to one of the sixteen special Lagrangian spheres in the curvature concentration region of the surface. 
\end{theorem}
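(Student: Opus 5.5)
The plan is to reduce the flow to a scalar parabolic equation for a potential function on $S^2$ and solve it by a contraction argument in weighted parabolic H\"older spaces $\wholdersp{k}{l}{}{}$, with decay in time built into the norm. By the Weinstein neighbourhood theorem, Lagrangians near the zero section of $T^*S^2$ are graphs of closed $1$-forms. Since $H^1(S^2)=0$, these forms are exact, $d u$. Along such a graph the Lagrangian mean curvature flow is equivalent, up to tangential diffeomorphisms, to the scalar equation
\begin{align*}
\partial_t u=\theta(du)-\theta_0 ,
\end{align*}
where $\theta(du)$ is the Lagrangian angle of the graph. Because the flow is zero-Maslov, the angle lifts to a real-valued function, so this equation makes sense. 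The Eguchi--Hanson flow in the hypothesis is then a solution $u_{EH}(t)$ with $u_{EH}\to 0$ smoothly.

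\textbf{Step 1: set up the Kummer problem.} Recall the Kummer construction: $T^4/\{\pm1\}$ with its sixteen singular points replaced by scaled Eguchi--Hanson necks of size $\epsilon$. Near each neck there is a Calabi--Yau structure $(J_\epsilon,\omega_\epsilon,\Omega_\epsilon)$ which, after rescaling, is $O(\epsilon^{k})$-close in $C^k$ to the Eguchi--Hanson structure on a large ball. Each exceptional sphere is a (rescaled) special Lagrangian, or is perturbed to one. Using a Weinstein neighbourhood of this special Lagrangian sphere $L_\epsilon$ in the K3, I write the Kummer flow again as $\partial_t u=\theta_\epsilon(du)-\theta_\epsilon(L_\epsilon)$. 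I then seek $u=u_{EH}+v$, with the transplanted Eguchi--Hanson flow serving as the approximate solution.

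\textbf{Step 2: the perturbation equation.} The function $v$ satisfies
\begin{align*}
\partial_t v - \mathcal{L}_t v = E_\epsilon + Q(v),
\end{align*}
with the following ingredients:
\begin{itemize}[label={}]
\item $\mathcal{L}_t$ is the linearisation of $\theta_\epsilon$ at $du_{EH}(t)$. It is a time-dependent elliptic operator that converges, as $t\to\infty$, to $-\Delta_{L_\epsilon}$ (the linearisation of the angle at a special Lagrangian is $d^*$ applied to the $1$-form).
\item $E_\epsilon$ is the error from the change of ambient structure, which is small in $\epsilon$. Because $u_{EH}$ decays, $E_\epsilon$ should also decay in time, up to a constant-in-space part.
\item $Q$ collects the quadratic terms.
\end{itemize}
Constants in $u$ do not matter, so I work modulo constants, i.e. with mean-zero functions. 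On this space $-\Delta$ has a spectral gap $\lambda_1>0$.

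\textbf{Step 3: linear theory.} I prove a Schauder estimate for $\partial_t-\mathcal{L}_t$ on $S^2\times[0,\infty)$ in norms weighted by $e^{\mu t}$ with $0<\mu<\lambda_1$. The approach is to combine local parabolic Schauder estimates with an energy or spectral-gap argument for large $t$, where $\mathcal{L}_t$ is a small perturbation of $-\Delta$. For finite times, uniform bounds follow because $u_{EH}$ is smooth and graphical on compact time intervals. A point to handle carefully: $\mathcal{L}_t$ need not preserve mean-zero functions exactly, so I project out constants, which only shifts $\theta_0$.

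\textbf{Step 4: fixed point and convergence.} I map $v\mapsto(\partial_t-\mathcal{L}_t)^{-1}(E_\epsilon+Q(v))$ with zero initial data. This is a contraction on a small ball of the weighted space once $\epsilon$ is small. The resulting solution decays exponentially, so the flow exists for all time and converges smoothly to $L_\epsilon$; higher regularity follows by bootstrapping.

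\textbf{Main obstacle.} I expect the hard step to be the uniform-in-time linear estimate, together with ensuring that $E_\epsilon$ decays. On intermediate time scales the flow is far from its limit, so I need uniformity over all of $[0,\infty)$. The construction also has to match the exact special Lagrangian $L_\epsilon$ of the K3 rather than the Eguchi--Hanson zero section. If $E_\epsilon$ does not decay, I would instead add a time-independent correction that accounts for the difference between $L_\epsilon$ and the zero section, and apply the contraction to the remainder.
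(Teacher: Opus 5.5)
Your outline is sound, and its analytic core is the paper's. Both use a Weinstein chart over the limiting sphere, the integrated scalar equation $\partial_t u=\theta(du)+\text{const}$, linearisation at the Eguchi--Hanson flow, removal of spatial constants (the paper's $a(t)$), and a fixed point in time-weighted parabolic H\"older spaces.

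On the analytic side you differ in two ways. You use exponential weights $e^{\mu t}$ where the paper uses polynomial weights $t^{\mu}$ with $\mu>1$. You use a Banach contraction where the paper uses the Schauder fixed point theorem, which forces it to rely on the compact embedding $P^{1,2,\alpha}_{\mu}\hookrightarrow P^{1,2,\alpha'}_{\mu'}$ and a separate continuity argument. Your route is more economical and gives uniqueness, since $Q(0)=0$ and $DQ(0)=0$ make $Q$ $O(\delta)$-Lipschitz on a $\delta$-ball. You should take $\mu$ below the decay rate of the Eguchi--Hanson flow as well as below the spectral gap, because that rate governs $E_\varepsilon$.

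On the geometric side, the paper isolates an abstract theorem with a \emph{fixed} symplectic form and a \emph{fixed} Lagrangian that is special Lagrangian for every $\varepsilon$. It checks these hypotheses on the Kummer surface by a hyperk\"ahler rotation. The gluing keeps $J$ and $\Omega$ fixed, so each exceptional sphere is $J$-holomorphic for all $\varepsilon$. For the rotated structure $(\mathrm{Im}\,\Omega,\ \varepsilon^{-2}\omega_\varepsilon+i\,\mathrm{Re}\,\Omega)$, the symplectic form is then independent of $\varepsilon$ and the sphere is exactly special Lagrangian.

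This is the point your Step 1 leaves as a hedge (``or is perturbed to one''), and it removes your stated main obstacle. There is no moving $L_\varepsilon$, one Weinstein chart serves for all $\varepsilon$, and the initial Lagrangian is unchanged. Also $E_\varepsilon=\theta_\varepsilon(\gamma)-\theta_0(\gamma)$ decays with $\gamma$, so no correction term is needed. Without the rotation your hedge would fail outright: a $J$-holomorphic sphere has positive $\omega_\varepsilon$-area, so no $\omega_\varepsilon$-Lagrangian lies in its homology class.

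Your $\varepsilon$-dependent-chart formulation is more general. It would tolerate a varying symplectic form, given some construction of $L_\varepsilon$ (e.g.\ by the implicit function theorem). The cost is parameter-dependent charts and initial data that move with $\varepsilon$.
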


The theorem follows easily from an abstract deformation result for Lagrangian mean curvature flows along a smooth family of Calabi--Yau structures (see Theorem \ref{deformationtheo} for a detailed statement): 

\begin{theorem}
    Consider a smooth family of Calabi--Yau structures $(M,\omega,\Omega_{\varepsilon})$, $\varepsilon\in [0,\varepsilon_0)$. Suppose that $L_{\infty}$ is a special Lagrangian with respect to any member of the family. For small enough $\varepsilon$, an LMCF $L_t$ in $(M,\omega,\Omega_0)$, that is graphical over $L_{\infty}$ and that converges smoothly to it, can be perturbed to an LMCF $\tilde{L}_{t}$ in $(M,\omega,\Omega_{\varepsilon})$ that also converges to $L_{\infty}$. 
\end{theorem}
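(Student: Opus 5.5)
Since the flow $L_t$ is graphical over $L_\infty$ and $\omega$ is fixed along the family, I will work in a Weinstein neighbourhood of $L_\infty$. This neighbourhood is symplectomorphic to a neighbourhood of the zero section in $T^*L_\infty$. Every Lagrangian $C^1$-close to $L_\infty$ is then the graph of a closed $1$-form. Since $L_\infty$ is a sphere in the application, and in general after restricting to the exact (Hamiltonian) part, this form is $du$ for a function $u:L_\infty\to\mathbb{R}$. Up to tangential diffeomorphisms, LMCF is the gradient flow $\partial_t f=J\nabla\theta$. In the graphical gauge it becomes a scalar parabolic equation,
\begin{align*}
\partial_t u = \theta_\varepsilon(du) - c_\varepsilon ,
\end{align*}
where $\theta_\varepsilon(du)$ is the Lagrangian angle of $\Gamma_{du}$ computed with $\Omega_\varepsilon$, and $c_\varepsilon$ is the constant angle of $L_\infty$. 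The structure is zero-Maslov near $L_\infty$, so $\theta_\varepsilon$ lifts to a real function. The operator $F_\varepsilon(u)=\theta_\varepsilon(du)$ is a second-order, fully nonlinear elliptic operator depending smoothly on $\varepsilon$. Its linearisation at $u=0$ is $-\Delta_{g_\varepsilon}u$ up to sign conventions, plus terms that vanish because $L_\infty$ is special Lagrangian for every $\varepsilon$. Indeed, $F_\varepsilon(0)=c_\varepsilon$ for all $\varepsilon$, and this is where the hypothesis on $L_\infty$ enters.

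Let $u_0(t)$ be the given solution for $\varepsilon=0$, with $u_0(t)\to$ const smoothly; after normalising the mean, $u_0(t)\to 0$. I look for $u_\varepsilon=u_0+v$, which gives
\begin{align*}
\partial_t v - DF_0(u_0(t))v = \big(F_\varepsilon(u_0+v)-F_0(u_0+v)\big) + Q(u_0,v) - (c_\varepsilon - c_0),
\end{align*}
with $Q$ quadratic in $v$ and initial condition $v(0)=0$; alternatively, perturbed initial data could be allowed. The linear operator $P=\partial_t-DF_0(u_0(t))$ is a time-dependent perturbation of the heat operator $\partial_t+\Delta$, converging to it as $t\to\infty$. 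I will work in weighted parabolic Hölder spaces $P^{k,\alpha}_\mu$ with exponential weight $e^{\mu t}$, as the macro $\wholdersp$ in the paper suggests. Here $0<\mu<\lambda_1(L_\infty)$, the first nonzero eigenvalue, and functions are restricted to mean zero; the constant mode is handled separately, since the flow of the mean is decoupled up to lower-order terms.

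The key steps are as follows.
\begin{enumerate}[(i)]
\item Prove that $P$ is an isomorphism between weighted mean-zero spaces with $\varepsilon$-independent bounds. This uses Schauder estimates on unit time intervals, together with the spectral gap of $\Delta_{g_0}$ on $L_\infty$. The time-dependent error $DF_0(u_0(t))+\Delta$ decays exponentially, so it can be absorbed for large $t$. On the compact interval $[0,T]$ one uses standard parabolic theory.
\item Show that the forcing term $F_\varepsilon-F_0$ is $O(\varepsilon)$ in the weighted norm. The concern is that it might not decay in $t$. It does decay: it vanishes at $u=0$ because $L_\infty$ is special Lagrangian for all $\varepsilon$, up to the constant $c_\varepsilon-c_0$ absorbed into the mean. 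It is therefore bounded by $C\varepsilon\,\|u_0+v\|$, which decays like $e^{-\mu t}$.
\item Prove a contraction estimate for $Q$.
\item Apply the Banach fixed point theorem to $v\mapsto P^{-1}(\text{RHS})$ in a ball of radius $C\varepsilon$, for $\varepsilon$ small.
\end{enumerate}
Smooth convergence of $\tilde L_t$ to $L_\infty$ then follows from the exponential decay together with parabolic bootstrapping. The tangential reparametrisation is recovered by integrating the normal velocity, giving a genuine LMCF.

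The main obstacle is step (i), obtaining uniform invertibility on the infinite time interval. The kernel of the Laplacian (the constants) must be handled explicitly, and the weight $\mu$ must be chosen below the spectral gap uniformly in $\varepsilon$. If the direct approach fails, I would first split time at a large $T$. After time $T$ the flow is a small perturbation of $L_\infty$, and one can instead use stability of the special Lagrangian, that is, exponential convergence of LMCF near a strictly stable minimal Lagrangian. The finite interval $[0,T]$ would be handled by continuous dependence of parabolic solutions on parameters.
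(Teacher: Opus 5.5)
Your proposal is correct and has the same skeleton as the paper's proof. Both work in a fixed Weinstein neighbourhood, which exists uniformly because $\omega$ does not depend on $\varepsilon$. Both write the perturbed flow as the graph of $\gamma+dv$ with $v(\cdot,0)=0$. Both split the resulting scalar equation into a linear parabolic operator, a zero-order forcing term and a quadratic remainder, and then solve by a fixed point argument in time-weighted parabolic H\"older spaces of mean-zero functions.

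The implementation differs in three ways:
\begin{itemize}
\item \textbf{Linearisation.} The paper linearises $\theta_\varepsilon$ at $\gamma$, so its linear operator depends on $\varepsilon$. You linearise $F_0$ at $u_0(t)$ and push all $\varepsilon$-dependence into the forcing, so your linear estimates are automatically uniform in $\varepsilon$.
\item \textbf{Weights.} The paper uses polynomial weights $t^{\mu}$ with $\mu>1$. These need no spectral condition, since the given flow converges exponentially. Your exponential weights need $\mu<\lambda_1(L_\infty)$, and also $\mu$ below the convergence rate of the given flow. You should add the second condition: your bound $C\varepsilon\Norm{u_0+v}$ requires $u_0$ itself to lie in the weighted space. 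In return you get exponential convergence of $\tilde L_t$ directly.
\item \textbf{Fixed point theorem.} The paper uses Schauder's theorem, which needs the compact embedding of Proposition~\ref{Schaudercompactness} and a continuity argument by contradiction. Your contraction instead needs Lipschitz bounds on $Q$ and on the forcing in $v$; these are standard, and the forcing has Lipschitz constant $O(\varepsilon)$. It also gives uniqueness in the ball.
\end{itemize}
Your observation that $F_\varepsilon(w)-F_0(w)-(c_\varepsilon-c_0)$ vanishes both at $\varepsilon=0$ and at $w=0$, and is therefore $O(\varepsilon(|dw|+|\nabla dw|))$, combines the two ingredients of the paper's zero-order estimate into one bound. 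Those ingredients are decay in $t$ via Lemma~\ref{lagestimate} and smallness in $\varepsilon$ via continuity of the structures.

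Three smaller corrections:
\begin{itemize}
\item \textbf{Exactness of $\gamma(t)$.} This has nothing to do with $L_\infty$ being a sphere, and ``restricting to the exact part'' is not a valid step. The correct reason is that $\partial_t\gamma=d\theta_0(\gamma)$ is exact by the zero-Maslov assumption. So the class $[\gamma(t)]$ is constant in $t$, and it must vanish because $\gamma(t)\to 0$. Alternatively, just perturb $\gamma+dv$ as the paper does.
\item \textbf{The constant mode.} $P$ does not preserve mean-zero functions exactly, because the volume form is time-dependent and there is the first-order term $\langle\nabla\theta_0(\gamma),\hat V(dv)\rangle$. You have to fix the constant mode using the freedom to add a function of $t$ to the equation. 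The paper's treatment via $a(t)$ and Theorem~\ref{exttheory} is equally brief on this point.
\item \textbf{The fallback.} $L_\infty$ is not strictly stable when $b_1(L)>0$, because harmonic $1$-forms are Jacobi fields. The stability statement you invoke must therefore be for Hamiltonian perturbations, where the relevant gap is $\lambda_1$ of the Laplacian on functions. It must also hold uniformly in $\varepsilon$.
\end{itemize}
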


Given the generality of the theorem, it should be possible to use it in other contexts. In particular, it may be used to construct examples of LMCF's in Calabi--Yau manifolds that arise from gluing constructions. 

The main bulk of the paper is dedicated to proving the abstract deformation result. Its proof can be summarized as follows:
\begin{enumerate}
    \item Suppose $F_0$ is an LMCF in $(M,\omega,\Omega_0)$. First we set up the problem as a perturbation problem by assuming that the new flow in $(M,\omega,\Omega_{\varepsilon})$ will be of the form
    \begin{align*}
        F_{\varepsilon}=F_0+E_{\varepsilon},
    \end{align*}
    then we solve for $E_{\varepsilon}$. In a Calabi--Yau manifold, the mean curvature vector of a Lagrangian is given by the gradient of its Lagrangian angle. Since we are assuming that our initial flow is graphical, we can see that $E_{\varepsilon}$ solves an equation that is equivalent to a scalar equation of the form
    \begin{align*}
        \der[u]{t}= P(u), 
    \end{align*}
    where $P$ is a fully nonlinear second order elliptic operator. 
    \item  By linearising at $u=0$, we see that the equation can be written as 
    \begin{align*}
        \der[u]{t}-\mathcal{L}(u)= C_{\varepsilon}+Q_{\varepsilon}(u).
    \end{align*}
    Here $\mathcal{L}$ is a second order linear elliptic operator, $C$ represents the zero order terms of the PDE, and $Q$ represents its quadratic and higher order terms. 
    \item We study the linear operator. In order to do this, we introduce weighted H\"older spaces, with good compactness properties, and where the linear operator has a nice existence theory.  
    \item Finally, we prove estimates on the zero order and higher order terms of the equation, which allow us to set up an iteration map, that satisfies the hypothesis of the Schauder fixed point theorem. This essentially boils down to studying the behaviour of the Lagrangian angle. 
\end{enumerate}

The paper is organized as follows: In the first section we recall a few facts about LMCF that we will use throughout the paper. In the second section, we carry out the proof of the abstract deformation result. Finally, in the third section we briefly describe the construction of the Kummer K3 surface, and we show that our abstract deformation result can indeed be used to prove Theorem \ref{Maintheo1Informal}. 

\section{Preliminaries}

Recall that each submanifold $f:L\rightarrow (M,g)$ of a Riemannian manifold has a normal vector field, called the mean curvature vector, associated to it. We will denote it by $\overrightarrow{H}(f)$. We say that a time dependent family of submanifolds, $F:L\times[t_1,t_2)\rightarrow M$, is a mean curvature flow if the mean curvature vector satisfies 
\begin{align}\label{MCF}
    \Der[]{t}F^{\bot}=\overrightarrow{H}.
\end{align} 

As mentioned in the introduction, by the work of Smoczyk \cite{smoczyk1996canonical}, it is known that a mean curvature flow in a Calabi--Yau manifold that starts at a Lagrangian submanifold preserves the Lagrangian condition. Thus it makes sense to talk about the Lagrangian Mean Curvature Flow (LMCF) in a Calabi--Yau manifold. 

In our applications, it will be important to know that if a LMCF converges to a special Lagranian then it does so exponentially fast in every $C^k$-norm. 

\begin{prop}[See \cite{li2009convergence}] \label{expconvergence}
    Let $L$ be a closed smooth manifold. Let $F: L\times [0,\infty)\rightarrow (M, \omega, \Omega)$ be an LMCF, in a Calabi--Yau manifold, that converges to a Special Lagrangian $f_{\infty}: L\rightarrow (M,\omega,\Omega)$. Then for each $k\geq 0$ there are constants $C_k>0$, and $\lambda_k>0$ such that 
    \begin{align*}
        \Norm{\overrightarrow{H}(F(\cdot,t))}_{C^k}< C_ke^{-\lambda_kt}, \quad t\in [0,\infty).
    \end{align*}
    Moreover, $F(\cdot,t)$ converges exponentially fast to $f_{\infty}$. 
\end{prop}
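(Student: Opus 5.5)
The plan is to reduce the statement to a linear stability problem near the limit $f_\infty$. The tools are the Weinstein neighbourhood theorem, the structure of the linearised Lagrangian angle at a special Lagrangian, and parabolic regularity.

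\textbf{Step 1: write the flow as a graph.} Since $F(\cdot,t)\to f_\infty$ smoothly, there is a time $T_0$ after which each $F(\cdot,t)$ lies in a Weinstein neighbourhood of $f_\infty(L)$. After reparametrising by tangential diffeomorphisms (which do not affect (\ref{MCF})), each $F(\cdot,t)$ is the graph of a closed $1$-form $\eta_t$ on $L$.

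\textbf{Step 2: reduce to a scalar equation.} The mean curvature equals $J\nabla\theta$, so Hamiltonian deformations are generated by $\theta$. The cohomology class $[\eta_t]$ is therefore constant in time. Since $[\eta_t]\to 0$, we get $[\eta_t]=0$, so $\eta_t=du_t$ for a potential $u_t$ normalised to have mean zero. The flow then becomes a scalar parabolic equation
\begin{align*}
\partial_t u=\theta(du)-\theta_\infty .
\end{align*}
The linearisation of the Lagrangian angle at a special Lagrangian is $d\theta(v)=\Delta u$ for $v=J\nabla u$; this is the standard McLean computation. The equation therefore takes the form
\begin{align*}
\partial_t u=\Delta_{g_\infty}u+Q(u),
\end{align*}
where $Q(u)$ is at least quadratic in $(du,\nabla^2u)$, with smooth coefficients.

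\textbf{Step 3: energy decay.} $\Delta$ on mean-zero functions has spectral gap $\lambda_1>0$. Since $\Vert u_t\Vert_{C^2}\to0$, for $t$ large we have $|Q(u)|\le \delta(t)(|du|+|\nabla^2 u|)$ with $\delta(t)\to 0$. An energy estimate for $\int|\nabla u|^2$, or for $\int|\Delta u|^2$, then gives
\begin{align*}
\frac{d}{dt}E\le -(2\lambda_1-C\delta(t))E ,
\end{align*}
so $E$ decays exponentially for $t\ge T_1$. The main obstacle is closing this estimate: the quadratic terms involve second derivatives, so I would estimate the $L^2$ norm of $\Delta u$ (or work with $\theta$ itself) rather than $u$. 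The mean-zero constraint must also be preserved, up to a harmless constant.

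\textbf{Step 4: upgrade to every $C^k$.} I would use interpolation with the uniform smooth bounds coming from smooth convergence. Alternatively, one can apply interior parabolic Schauder estimates on the intervals $[t-1,t+1]$. Either way the exponential $L^2$ decay becomes exponential decay in every $C^k$, possibly with smaller rates $\lambda_k$. On the compact interval $[0,T_1]$ the bound is trivial after enlarging $C_k$.

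\textbf{Step 5: conclude.} Since $\overrightarrow{H}=J\nabla\theta$, this gives
\begin{align*}
\Norm{\overrightarrow{H}(F(\cdot,t))}_{C^k}<C_ke^{-\lambda_k t}.
\end{align*}
Integrating $\partial_t F^\perp=\overrightarrow{H}$ from $t$ to $\infty$, with the graph parametrisation, shows that $F(\cdot,t)$ converges exponentially fast to $f_\infty$.
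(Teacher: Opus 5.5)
Your argument is correct, but it takes a different route from the one behind this statement. The paper gives no proof and defers to the cited work of Li, which works intrinsically along the moving submanifolds $L_t$. Roughly: the Lagrangian angle solves $\partial_t\theta=\Delta_{g_t}\theta$; one differentiates $\int_{L_t}|H|^2$ and closes the estimate with a uniform lower bound on the first eigenvalue of $\Delta_{g_t}$ and bounds on the second fundamental form; smoothing estimates then upgrade this to $C^k$. Li also derives convergence from smallness assumptions on the initial data rather than assuming it. You instead linearise once around the known limit in a fixed Weinstein chart. The flow becomes the scalar equation $\partial_t u=\Delta_{g_\infty}u+Q(x,du,\nabla^2u)+c(t)$, which is Proposition \ref{IntegratedLMCF} with $\gamma=du$, integrated once and expanded around the zero section. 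The decay then comes from the spectral gap of the single fixed operator $\Delta_{g_\infty}$. Your route is more elementary, matches the graphical framework the paper itself uses for Theorem \ref{deformationtheo}, and gives exponential convergence of the graphs directly. The price is that it uses the hypothesis of smooth convergence essentially: to have graphs at all, to make $Q$ small, and to force $[\eta_t]=0$. The intrinsic approach needs no graph representation and proves convergence itself.

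Two small points. First, the hedge in Step 3 is unnecessary: the estimate for $E=\int_L|\nabla u|^2$ closes directly. You get $\frac{dE}{dt}=-2\int_L(\Delta u)^2-2\int_L\Delta u\,Q$, and the time-dependent constant drops out because $\int_L\Delta u=0$. Bochner together with the spectral gap gives $\Norm{\nabla^2u}_{L^2}^2+\Norm{\nabla u}_{L^2}^2\le C\Norm{\Delta u}_{L^2}^2$. Hence $|\int_L\Delta u\,Q|\le C\delta\int_L(\Delta u)^2$ and $\frac{dE}{dt}\le-(2-C\delta)\lambda_1E$. Second, the statement does not assume zero Maslov, so you should say explicitly that $\theta_t$ lifts to a real function for large $t$ because $L_t$ is $C^1$-close to a special Lagrangian. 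That is what makes $d\theta_t$ exact and $[\eta_t]$ constant.
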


The Lagrangian neighbourhood theorem guarantees that every compact Lagrangian submanifold $L\subset (M,\omega)$ has a neighbourhood that is symplectomorphic to a neighbourhood of the zero section in $T^*L$.

\begin{theorem}[See Theorem 3.4.13 in \cite{mcduff2017introduction}]
    Let $(M,\omega)$ be a symplectic manifold and $L\subset M$ be a compact Lagrangian submanifold. Then there exists a neighbourhood $U\subset T^*L$ of the zero section, a neighbourhood $V\subset M$ of $L$ and a diffeomorphism $\phi:U\rightarrow V$ such that
    \begin{align*}
        \phi^*\omega=\omega_{can}, \quad \phi\vert_L=\text{id}.
    \end{align*}
    Here, $\omega_{can}$ is the canonical symplectic form in $T^*L. $
\end{theorem}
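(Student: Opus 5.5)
The plan is the standard Moser--Weinstein argument: combine the linear symplectic model near $L$ with Moser's path method.

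\emph{Step 1: a diffeomorphism.} Choose a Riemannian metric $g$ on $M$. Define a bundle isomorphism $T^*L \cong TL^{\bot}$ by $\alpha \mapsto v$ with $\iota_v\omega|_{TL} = -\alpha$, or a sign convention to be fixed. This map is fiberwise an isomorphism because $L$ is Lagrangian: $\omega$ induces a nondegenerate pairing between $T_pL$ and $T_pM/T_pL \cong T_pL^{\bot}$. Composing with the normal exponential map gives a map $\phi_0$ from a neighbourhood of the zero section into $M$, with $\phi_0|_L=\mathrm{id}$. Its differential along $L$ is the identity on $TL$ and the above isomorphism on the fibres, so by compactness of $L$ and the inverse function theorem it is a diffeomorphism $U_0\to V_0$ after shrinking.

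\emph{Step 2: the forms agree along $L$.} Set $\omega_0=\omega_{can}$ and $\omega_1=\phi_0^*\omega$; both are closed. At points of the zero section, $T(T^*L)=TL\oplus T^*L$, and both forms vanish on $TL$. Both forms also vanish on the vertical $T^*L$: for $\omega_{can}$ this is clear, and for $\omega_1$ one must choose the isomorphism so that normal directions are isotropic. The cleanest fix is to use a compatible almost complex structure $J$ and the normal bundle $J(TL)$, which is Lagrangian. The mixed pairing matches by construction of the isomorphism. Hence $\omega_0=\omega_1$ on $L$.

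\emph{Step 3: Moser's method.} Let $\omega_s=(1-s)\omega_0+s\omega_1$. These forms are nondegenerate on a neighbourhood of $L$ for all $s\in[0,1]$, by compactness and openness of nondegeneracy. Let $\pi_t(q,p)=(q,tp)$ be the fibre scaling, and apply the relative Poincar\'e lemma: the homotopy formula gives $\tau=\omega_1-\omega_0=d\sigma$, where $\sigma=\int_0^1 \pi_t^*(\iota_{X_t}\tau)\,dt$ and $X_t$ is the scaling vector field. Since $\tau|_L=0$, we get $\sigma|_L=0$ as a form at points of $L$. Solve $\iota_{Y_s}\omega_s=-\sigma$. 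Then $Y_s$ vanishes on $L$, so its flow $\psi_s$ exists up to time $1$ on a small neighbourhood of $L$ and fixes $L$. The identity $\frac{d}{ds}\psi_s^*\omega_s=\psi_s^*(d\iota_{Y_s}\omega_s+\tau)=0$ gives $\psi_1^*\omega_1=\omega_0$, so $\phi=\phi_0\circ\psi_1$ works.

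\emph{Main obstacle.} The delicate step is Step 2: choosing the identification so that $\omega_1$ agrees with $\omega_{can}$ at every point of $L$, not just on $TL$. Choosing the $J$-normal bundle $J(TL)$ and the isomorphism $\alpha\mapsto$ the $g_J$-dual vector composed with $J$ makes $TL$ and $J(TL)$ complementary Lagrangians, and the pairing then matches $\omega_{can}$ exactly. The remaining points are the vanishing of $\sigma$ on $L$ and the domain issues for the flow, both handled by shrinking neighbourhoods.
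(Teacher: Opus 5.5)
Your proposal is correct and is the standard Weinstein--Moser argument. You take a compatible $J$ and the map $(q,v^*)\mapsto\exp_q(Jv)$, so that the pulled-back form agrees with $\omega_{can}$ along the zero section, and then apply Moser's isotopy via the fibre-scaling homotopy; this is the proof in McDuff--Salamon, which the paper simply cites rather than proving itself.

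One small correction: $\sigma$ vanishes at points of $L$ because the scaling field $p\,\partial_p$ vanishes on the zero section, not because $\tau|_L=0$. The vanishing of $\tau$ along $L$ is instead what gives $\pi_0^*\tau=0$ (hence $d\sigma=\tau$) and the nondegeneracy of every $\omega_s$ near $L$.
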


As a consequence, in certain cases it is possible to rewrite the LMCF equation as a PDE for differential forms. In our context, this will allow us to reduce the problem to a scalar PDE. 

\begin{definition}\label{graphicalLMCF}
Consider a Lagrangian submanifold, 
\begin{align*}
f_{\infty}:L\rightarrow (M,J,\omega,\Omega).
\end{align*}
Fix a Lagrangian neighbourhood $\phi: U\subset T^*L\rightarrow V\subset M$ of it. We say that a submanifold $f:L\rightarrow (M, \omega, \Omega)$ is graphical over $f_{\infty}$, if there exists a $1$-form $\gamma: L\rightarrow T^*L$, such that $f=\phi\circ \gamma$. 

Similarly, we say that a family of submanifolds $F:L\times [0,T)\rightarrow (M,\omega,\Omega)$ is graphical if there is a time-dependent $1$-form $\gamma$, such that $F=\phi\circ\gamma$. 
\end{definition}

Notice that a graphical submanifold is a Lagrangian if and only if it is represented by a closed form.

Suppose that $F:L\times[0,T)\rightarrow (M,\omega,\Omega)$ is an LMCF that is graphical over $f_{\infty}:L\rightarrow (M,\omega,\Omega)$, then
\begin{align}\label{originalLMCF}
    \Der[F]{t}^{\bot}=\overrightarrow{H}, \quad F^*\omega=0,
\end{align}
and there is a Lagrangian neighbourhood $\phi:U\rightarrow V$, and a time-dependent $1$-form $\gamma$ such that $F=\phi\circ\gamma$. 

As we mentioned, the Lagrangian condition is equivalent to $d\gamma=0$. On the other hand, notice that the Lagrangian condition implies that 
\begin{align}
    F^*\omega(\Der[F]{t},\cdot)=F^*\omega(\Der[F]{t}^{\bot},\cdot)=F^*\omega(\overrightarrow{H}, \cdot)=H,
\end{align}
for a $1$-form $H$, known as the mean curvature $1$-form. It is well known that, for Lagrangians, it coincides with the differential of the Lagrangian angle. 

\begin{prop}[See Proposition 2.4.6 in (\cite{wood2020singularities})]\label{lagangle}
    Let $F:L\rightarrow M$ be an oriented Lagrangian submanifold of a Calabi--Yau manifold. Let $\theta$ be its Lagrangian angle, and let $H$ be its mean curvature $1$-form. Then $d\theta=H$. 
\end{prop}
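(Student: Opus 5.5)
The plan is to compute the variation of $\Omega|_L$ along the immersion and compare it with the mean curvature. The whole argument is local, so I work near a point $p\in L$. I write $\Omega|_L=e^{i\theta}vol_L$ with a local real lift $\theta$. The claim $d\theta=H$ is an identity between $1$-forms. It therefore suffices to show $d\theta(X)=\omega(\overrightarrow{H},X)$ for every tangent vector $X$ at $p$, with the paper's convention $H=F^*\omega(\overrightarrow{H},\cdot)$.

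First I pick a local orthonormal frame $e_1,\dots,e_n$ of $TL$ that is geodesic at $p$, i.e.\ $\nabla^L_{e_j}e_k(p)=0$. Since $L$ is Lagrangian, $e_1,\dots,e_n,Je_1,\dots,Je_n$ is an orthonormal basis of $TM$ along $L$. Hence $Je_1,\dots,Je_n$ span the normal bundle, and $\Omega(e_1,\dots,e_n)=e^{i\theta}$ holds with unit modulus, using the normalisation of the Calabi--Yau structure.

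Next I differentiate this identity in the direction $X=e_j$. Because $\Omega$ is parallel (Calabi--Yau, so the holonomy lies in $SU(n)$),
\begin{align*}
 i\,d\theta(e_j)e^{i\theta}=\sum_k \Omega(e_1,\dots,\overline{\nabla}_{e_j}e_k,\dots,e_n).
\end{align*}
At $p$ the tangential part of $\overline{\nabla}_{e_j}e_k$ vanishes. What remains is the second fundamental form $II(e_j,e_k)=\sum_l h_{ljk}Je_l$, where $h_{ljk}=\langle II(e_j,e_k),Je_l\rangle=-\omega(II(e_j,e_k),e_l)$ up to the sign convention $\omega=g(J\cdot,\cdot)$. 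Complex multilinearity of $\Omega$ gives $\Omega(\dots,Je_l,\dots)=i\,\Omega(\dots,e_l,\dots)$. In the $k$-th slot, only $l=k$ survives. This yields
\begin{align*}
 d\theta(e_j)=\sum_k h_{kjk}.
\end{align*}

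The key step is the full symmetry of $h_{ljk}$. It is symmetric in $j,k$ because the second fundamental form is symmetric. It is symmetric in $l,k$ because $\omega|_L=0$ and $J$ is parallel: differentiating $\omega(e_l,e_k)=0$ along $e_j$ gives $\omega(II(e_j,e_l),e_k)=\omega(II(e_j,e_k),e_l)$. Hence
\begin{align*}
 \sum_k h_{kjk}=\sum_k h_{jkk}=\langle \overrightarrow{H},Je_j\rangle,
\end{align*}
and this equals $\pm\omega(\overrightarrow{H},e_j)=H(e_j)$ under the paper's conventions.

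The main obstacle is bookkeeping of signs and orientation conventions, namely $\omega=g(J\cdot,\cdot)$ versus $g(\cdot,J\cdot)$ and the sign in the Calabi--Yau normalisation. A sign error here would produce $d\theta=-H$. I would fix these conventions by checking on a straight line in $\mathbb{C}$ deformed to a curve. There $\theta$ is the tangent angle and $d\theta/ds$ is the curvature, and the signs must agree.
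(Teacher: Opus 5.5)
The paper gives no argument of its own for this statement; it only cites Wood's thesis. Your proof is the standard one and is correct: you differentiate the parallel form $\Omega$ in a geodesic frame and use the total symmetry of $h_{ljk}$ coming from $\omega|_L=0$ and $\nabla J=0$. This yields the convention-free identity $d\theta(X)=\langle\overrightarrow{H},JX\rangle$, i.e.\ $\overrightarrow{H}=J\nabla\theta$.

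The only thing to adjust is what you expect from the final sign check. The usual convention is $g=\omega(\cdot,J\cdot)$, i.e.\ $\omega=g(J\cdot,\cdot)$. The paper's normalisation actually forces this in your test case $n=1$, where $\omega=\tfrac{i}{2}\Omega\wedge\overline{\Omega}=dx\wedge dy$ for $\Omega=dz$. With it, your identity gives $\omega(\overrightarrow{H},\cdot)=-d\theta$, and the curve in $\mathbb{C}$ confirms this rather than making the signs agree.

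So $d\theta=H$ holds as written only if $H$ is read as $\omega(\cdot,\overrightarrow{H})$, or under the opposite convention $\omega=g(\cdot,J\cdot)$. The paper's own Lemma~\ref{pullback} and Proposition~\ref{IntegratedLMCF} are together consistent only with $F^*\omega(\overrightarrow{H},\cdot)=-d\theta$. You should state the convention you adopt explicitly, rather than count on the check to produce $+H$.
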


Given that $\phi^*\omega=\omega_{can}$, we can prove the following

\begin{lemma}[See Lemma 4.1 in \cite{su2024infinite}]\label{pullback}
    Let $f:L\rightarrow M$ be a Lagrangian submanifold and $\phi:U\rightarrow M$ be a Lagrangian neighborhood. Then, given a time-dependent family of closed $1$-forms $\gamma$ on $L$ whose image belongs to $U$,
    \begin{align*}
        (\phi\circ\gamma)^*(\omega(\Der[(\phi\circ\gamma)]{t},\cdot)=-\der[\gamma]{t}. 
    \end{align*}
\end{lemma}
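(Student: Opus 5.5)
The plan is to compute the left-hand side in the cotangent bundle model and reduce it to the tautological one-form. Since $\phi^*\omega=\omega_{can}$ and $\phi\circ\gamma$ factors through $\phi$, we have
\begin{align*}
(\phi\circ\gamma)^*\Big(\omega\big(\Der[(\phi\circ\gamma)]{t},\cdot\big)\Big)=\gamma^*\Big(\omega_{can}\big(\Der[\gamma]{t},\cdot\big)\Big).
\end{align*}
Here $\gamma$ is viewed as a map $L\times[0,T)\rightarrow T^*L$. So it suffices to prove the identity in $(T^*L,\omega_{can})$ with $\phi=\text{id}$.

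Fix the sign convention $\omega_{can}=-d\lambda$, where $\lambda$ is the tautological one-form, so that $\sigma^*\lambda=\sigma$ for every one-form $\sigma$ on $L$. This is the convention under which the stated sign holds; with the opposite convention one gets $+\der[\gamma]{t}$. Then regard $\Gamma=\gamma:L\times[0,T)\rightarrow T^*L$ as a single map. For $v\in T_xL$ we have $\omega_{can}(\partial_t\Gamma,d\Gamma(v))=(\Gamma^*\omega_{can})(\partial_t,v)$, so the problem becomes computing $\Gamma^*\omega_{can}=-d(\Gamma^*\lambda)$ on $L\times[0,T)$.

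The key step is identifying $\Gamma^*\lambda$. Since $\lambda$ annihilates vertical vectors and $\partial_t\Gamma$ is vertical, $\Gamma^*\lambda(\partial_t)=0$. Restricted to each time slice, $\Gamma^*\lambda=\gamma_t^*\lambda=\gamma_t$. Hence $\Gamma^*\lambda=\gamma_t$, meaning the time-dependent one-form with no $dt$ component.

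Now split the exterior derivative on $L\times[0,T)$ as $d=d_L+dt\wedge\partial_t$. This gives
\begin{align*}
d(\Gamma^*\lambda)=d_L\gamma+dt\wedge\der[\gamma]{t}=dt\wedge\der[\gamma]{t},
\end{align*}
where the closedness hypothesis $d\gamma=0$ is used. Therefore
\begin{align*}
(\Gamma^*\omega_{can})(\partial_t,v)=-\Big(dt\wedge\der[\gamma]{t}\Big)(\partial_t,v)=-\der[\gamma]{t}(v),
\end{align*}
which is the claimed identity.

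The main obstacle is bookkeeping rather than analysis. One must justify that $\partial_t\Gamma$ is vertical, which holds because the base point $x$ is fixed. One must also get the sign conventions for $\omega_{can}$ and $\lambda$ consistent with the statement.

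There is an alternative check in local coordinates $(x^i,p_i)$ with $\omega_{can}=dx^i\wedge dp_i$. There $\partial_t\Gamma=\dot\gamma_i\partial_{p_i}$ and $d\Gamma(\partial_j)=\partial_{x^j}+\partial_j\gamma_i\partial_{p_i}$, which gives $\omega_{can}(\partial_t\Gamma,d\Gamma\partial_j)=-\dot\gamma_j$. This confirms the sign, and in fact shows that closedness is not needed for this pointwise identity.
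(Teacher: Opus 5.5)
Your proof is correct and complete. The paper does not prove this lemma itself. It only says the lemma follows from $\phi^*\omega=\omega_{can}$ and cites Su. Your argument carries out exactly that reduction, then does the tautological-form computation: $\Gamma^*\lambda=\gamma_t$ and $\Gamma^*\omega_{can}=-d_L\gamma-dt\wedge\der[\gamma]{t}$, evaluated on $(\partial_t,v)$.

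You are right on two further points. Closedness of $\gamma$ is never needed, because $d_L\gamma$ has no $dt$-leg. The sign depends on the convention $\omega_{can}=\sum_i dx^i\wedge dp_i$.

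This sign is also the one that makes Proposition~\ref{IntegratedLMCF} come out correctly. With $\omega(X,Y)=g(JX,Y)$ and $\overrightarrow{H}=J\nabla\theta$ one has $\omega(\overrightarrow{H},\cdot)=-d\theta$, and combined with your lemma this gives $\der[\gamma]{t}=d\theta$. The one-dimensional graph case $y=w(x,t)$ confirms it: there $w_t=\partial_x\arctan w_x$. So if the paper's chain of identities needs a sign fix, it is the line $F^*\omega(\overrightarrow{H},\cdot)=H=d\theta$, not the lemma.
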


It follows from Lemma \ref{pullback}, and Proposition \ref{lagangle} that

\begin{prop}\label{IntegratedLMCF}
Let $f_{\infty}:L\rightarrow (M,J,\omega,\Omega)$ be a Lagrangian. Fix a Lagrangian neighbourhood $\phi: U\subset T^*L\rightarrow V\subset M$ of it. Then $\phi\circ\gamma: L\times [0, T)\rightarrow (M,\omega,\Omega)$ is a graphical LMCF if and only if 
\begin{align}\label{originalformLMCF}
    \der[\gamma]{t}=d\theta(\gamma), \quad d\gamma=0.
\end{align}
\end{prop}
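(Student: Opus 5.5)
The plan is to prove both directions by translating the flow equation into a statement about $1$-forms on $L$ via the pullback $F^*\omega$, with $F=\phi\circ\gamma$. Lagrangian graphs have already been characterised: $\phi\circ\gamma(\cdot,t)$ is Lagrangian if and only if $d\gamma=0$, because $\phi^*\omega=\omega_{can}$ and $\gamma^*\omega_{can}=-d\gamma$ for the tautological structure. So in both directions we may assume $d\gamma=0$, and it remains to show that the LMCF equation $\Der[F]{t}^{\bot}=\overrightarrow{H}$ is equivalent to $\der[\gamma]{t}=d\theta(\gamma)$.

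For the forward direction, assume $F=\phi\circ\gamma$ is an LMCF. The computation preceding the proposition gives $F^*\omega(\Der[F]{t},\cdot)=F^*\omega(\overrightarrow{H},\cdot)=H$, and by Proposition \ref{lagangle} we have $H=d\theta(\gamma)$. Lemma \ref{pullback} identifies the left-hand side with $-\der[\gamma]{t}$. This gives the equation up to a sign, so I must track the orientation and sign convention in the definition of the mean curvature $1$-form, namely whether $H=\iota_{\overrightarrow{H}}\omega$ or $\omega(\cdot,\overrightarrow{H})$. The paper's statement $\der[\gamma]{t}=d\theta$ fixes the convention that makes the two agree. I regard this sign bookkeeping as routine but the one place to be careful.

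For the converse, assume $d\gamma=0$ and $\der[\gamma]{t}=d\theta(\gamma)$. By Lemma \ref{pullback} and Proposition \ref{lagangle}, this gives $F^*\omega(\Der[F]{t},\cdot)=F^*\omega(\overrightarrow{H},\cdot)$ as $1$-forms on $L$. Hence, for every tangent vector $v$ of $L$, $\omega(\Der[F]{t}-\overrightarrow{H},dF(v))=0$. Since $F$ is Lagrangian, the tangential part of any vector pairs to zero with $dF(TL)$. Therefore $\omega\big((\Der[F]{t})^{\bot}-\overrightarrow{H},dF(v)\big)=0$ for all $v$. The map $N\mapsto \omega(N,\cdot)|_{TL}$ from the normal bundle to $T^*L$ is an isomorphism: since $J$ maps $TL$ isomorphically onto the normal bundle, this map is $N\mapsto g(JN,\cdot)$ up to sign. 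It follows that $(\Der[F]{t})^{\bot}=\overrightarrow{H}$, which is the LMCF equation (\ref{MCF}).

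The main obstacle is this nondegeneracy step together with the sign conventions; everything else is a direct substitution of Lemma \ref{pullback} and Proposition \ref{lagangle}. The nondegeneracy is what guarantees that an equality of the $1$-forms recovers the normal component of the velocity, which is all the mean curvature flow equation constrains. I would also note that $\theta(\gamma)$ need only be defined locally (a choice of $S^1$-valued angle), since only $d\theta$ enters the equation.
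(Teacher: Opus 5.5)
Your proposal is correct and follows the same route as the paper, which simply combines the equivalence of the Lagrangian condition with $d\gamma=0$, the identity $F^*\omega(\Der[F]{t},\cdot)=F^*\omega(\overrightarrow{H},\cdot)$, Lemma \ref{pullback} and Proposition \ref{lagangle}; your nondegeneracy argument for the converse spells out what the paper leaves implicit. On the sign you flagged: the paper's chain, taken literally, gives $-\der[\gamma]{t}=d\theta$, but with $\omega=g(J\cdot,\cdot)$ and $\overrightarrow{H}=J\nabla\theta$ one gets $\omega(\overrightarrow{H},\cdot)|_{TL}=-d\theta$, which cancels the minus sign in Lemma \ref{pullback} and yields exactly $\der[\gamma]{t}=d\theta(\gamma)$.
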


\section{Abstract deformation result}

Consider a smooth family of Calabi--Yau structures $(M,\omega,\Omega_{\varepsilon})$, $\varepsilon\in[0,\varepsilon_0)$. Notice that the complex and metric structures are allowed to change along the family, but the symplectic structure is fixed. 

 The main purpose of this section is to describe how to perturb an LMCF in the Calabi--Yau manifold $(M,\omega,\Omega_0)$ to an LMCF in the Calabi--Yau $(M,\omega,\Omega_{\varepsilon})$ for $\varepsilon>0$ small enough. In particular, we prove the following result:

\begin{theorem}\label{deformationtheo}
     Consider a zero Maslov LMCF $F_0:L\times [0,\infty)\rightarrow (M,\omega,\Omega_{0})$ that converges to a special Lagrangian $f_{\infty}:L\rightarrow (M,\omega,\Omega_0)$. Moreover, suppose that $F_0$ is graphical over $f_{\infty}:L\rightarrow M$, and that $f_{\infty}:L\rightarrow (M, \omega, \Omega_{\varepsilon})$ is a special Lagrangian. Then, for small enough $\varepsilon$, the LMCF $F_{\varepsilon}:L\times [0,T)\rightarrow (M,\omega,\Omega_{\varepsilon})$, with the same initial condition as $F_0$, exists for all time and converges to $f_{\infty}: L\rightarrow M$.  
\end{theorem}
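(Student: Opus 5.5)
We follow the outline from the introduction. Work in a fixed Lagrangian neighbourhood $\phi:U\to V$ of $f_\infty$. By hypothesis $F_0=\phi\circ\gamma_0$ with $\gamma_0$ a time-dependent closed $1$-form. Since $F_0$ is zero Maslov, $\gamma_0$ should be exact up to a fixed harmonic part. Indeed, $\partial_t\gamma_0=d\theta_0(\gamma_0)$ is exact, so $[\gamma_0(t)]$ is constant in cohomology. Because $\gamma_0(t)\to 0$ (convergence to the zero section), this class is zero, hence $\gamma_0=du_0$. By Proposition \ref{IntegratedLMCF}, a graphical LMCF $\phi\circ\gamma$ in $(M,\omega,\Omega_\varepsilon)$ with $\gamma=du$ is produced by solving the scalar equation
\begin{align*}
\der[u]{t}=\theta_\varepsilon(du),
\end{align*}
where $\theta_\varepsilon(du)$ is the real-valued Lagrangian angle of $\phi\circ du$ with respect to $\Omega_\varepsilon$. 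The constant of integration is normalised so that $\theta_\varepsilon(0)=\theta_\infty$ is the (constant) angle of $f_\infty$, which is special Lagrangian for every $\varepsilon$. We then write $u=u_0+v$ with $v(0)=0$, so the flow has the same initial data as $F_0$. By uniqueness of the LMCF for smooth compact initial data, the solution obtained is $F_\varepsilon$.

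Next we linearise around $u_0$. The operator $u\mapsto\theta_\varepsilon(du)$ is a fully nonlinear elliptic second-order operator, whose linearisation at $du$ is, to leading order, $\Delta_{g(du)}$ plus lower order terms. The equation for $v$ then becomes
\begin{align*}
\der[v]{t}-\mathcal L_t v=C_\varepsilon+Q_\varepsilon(v),
\end{align*}
with the following terms:
\begin{itemize}
\item $\mathcal L_t$ is the linearisation of $\theta_0$ at $du_0(t)$;
\item $C_\varepsilon=\theta_\varepsilon(du_0)-\theta_0(du_0)$ is the error term;
\item $Q_\varepsilon$ collects the $\varepsilon$-dependence of the linear part together with the quadratic terms.
\end{itemize}
The key estimate on $C_\varepsilon$ uses that $\theta_\varepsilon(0)=\theta_0(0)$ (both equal the special Lagrangian phase) and smooth dependence on $\varepsilon$. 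Together these give $|C_\varepsilon|\lesssim \varepsilon\,\Norm{du_0(t)}_{C^1}$. By Proposition \ref{expconvergence}, this decays like $\varepsilon e^{-\lambda t}$ in every $C^k$ norm. So the forcing is both small and exponentially decaying.

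For the linear theory we introduce parabolic Hölder spaces with exponential weight $e^{\mu t}$, $0<\mu<\lambda$ (the spaces $\wholdersp{k}{l}{}{}$ of the paper). As $t\to\infty$, $\mathcal L_t$ converges exponentially to $\Delta_{f_\infty^*g}$, the Laplacian on the special Lagrangian, since the linearisation of the angle at a minimal Lagrangian is the Laplacian. The kernel consists of constants, and the first positive eigenvalue is $\lambda_1>0$. We choose $\mu<\lambda_1$. Since constants merely shift the normalisation (adding a constant to $u$ does not change $du$), we work modulo constants, or equivalently project onto the mean-zero part; the spatial mean of $v$ can be absorbed because only $dv$ matters geometrically. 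We then prove that $\partial_t-\mathcal L_t$ with zero initial data is invertible between these weighted spaces, with bounded inverse. This is done using Schauder estimates on unit time intervals plus an energy/spectral-gap argument giving exponential decay of the mean-zero part. The exponential convergence of $\mathcal L_t$ to the Laplacian is treated as a perturbation for large $t$.

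Finally, we set up the map $v\mapsto(\partial_t-\mathcal L)^{-1}(C_\varepsilon+Q_\varepsilon(v))$ on a small ball in the weighted space. We show $\Norm{Q_\varepsilon(v)}\lesssim \varepsilon\Norm{v}+\Norm{v}^2$, with derivatives controlled, so that the map preserves a ball of radius $\sim\varepsilon$. It is then either a contraction or, using compactness of weighted embeddings, satisfies the hypotheses of Schauder's fixed point theorem. The fixed point yields a solution $dv$ that decays exponentially, so $F_\varepsilon=\phi\circ d(u_0+v)$ exists for all time and converges to $f_\infty$. The main obstacle is the linear theory on $[0,\infty)$: the uniform invertibility in weighted spaces requires handling the time dependence of $\mathcal L_t$ and the constant kernel. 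To get this, we use the exponential convergence from Proposition \ref{expconvergence} and the spectral gap of the limiting Laplacian.
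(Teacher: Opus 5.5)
Your proposal is correct and follows essentially the same strategy as the paper. You write the perturbed flow as $\phi\circ(\gamma+dv)$ in the fixed Lagrangian neighbourhood, use the zero-Maslov condition to reduce to a scalar parabolic equation, invert the linearised operator on weighted parabolic H\"older spaces after removing the constant kernel (the paper does this by subtracting $a(t)$ and taking forcing in $\orthcomp{1}$), and close with zero-order and quadratic estimates via a fixed point theorem.

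The differences are technical. The paper's weights are polynomial, $t^{\mu}$ with $\mu>1$, rather than exponential. It also linearises $\theta_{\varepsilon}$ at $\gamma$ and applies Schauder's theorem. You instead linearise $\theta_0$ at $du_0$ and use weights $e^{\mu t}$ with $\mu<\min\{\lambda,\lambda_1\}$; this needs the spectral gap, but it gives exponential convergence directly and makes a contraction argument available.
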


We prove the theorem by a fixed point argument. To do this there are three key steps. First we use Proposition \ref{IntegratedLMCF} to interpret the problem as a scalar PDE. Afterwards, we write the PDE as a ``quasilinear" problem in an adequate function space where the linearization is well behaved. Finally we prove estimates on the zeroth and nonlinear terms of the PDE, that allow us to define an iteration mapping that satisfies the hypothesis of the Schauder fixed point theorem. 

\subsection{Integrating the perturbation problem}

We shall use the notation of Theorem \ref{deformationtheo}. We denote the Lagrangian angle with respect to $(M,\omega,\Omega_{\varepsilon})$ by $\theta_{\varepsilon}$. 

Consider a Lagrangian neighbourhood $\phi:U\subset T^*L\rightarrow M$. Suppose $F_0=\phi\circ \gamma$, for a time-dependent $1$-form $\gamma$. It follows from Proposition \ref{IntegratedLMCF} that
\begin{align*}
    \der[\gamma]{t}=d\theta_0(\gamma), \quad d\gamma=0.
\end{align*}

Similarly, suppose that the perturbed flow will be of the form
\begin{align*}
F_{\varepsilon}=\phi\circ(\gamma+du),
\end{align*}
and denote its mean curvature vector by $\overrightarrow{H_{\varepsilon}}$. Then, 
\begin{align}
    \Der[F_{\varepsilon}]{t}^{\bot}=\overrightarrow{H}_{\varepsilon}, \quad F_{\varepsilon}^*\omega=0, 
\end{align}
is equivalent to 
\begin{align}
    \der[(\gamma+du)]{t}=d\theta_{\varepsilon}(\gamma+du), \quad d(\gamma+du)=0.
\end{align}
We can conclude that $F_{\varepsilon}$ is a LMCF if and only if $u$ satisfies
\begin{align}\label{perturbedproblem}
    \der[u]{t}=\theta_{\varepsilon}(\gamma+du)-\theta_0(\gamma)+a(t),
\end{align}
where $a(t)$ is a result of integrating $1$-forms into functions, which can be done because we are assuming that $\gamma$ is zero Maslov. 

Notice that we want $F_\varepsilon$ and $F_0$ to have the same initial condition. In conclusion, we want to solve the problem

\begin{align}
    \begin{cases}
        \der[u]{t}=\theta_{\varepsilon}(\gamma+du)-\theta_0(\gamma)+a(t),\\
        u(\cdot,0)=0.
    \end{cases}
\end{align}

\subsection{Linear theory}

In this section we explain how to compute the linearization of (\ref{perturbedproblem}) at $0$. Afterwards we develop the necessary linear theory to be able to set up the fixed point problem. 

First of all, it is convenient to have an expression for the deformation vector field of our original solution $\gamma$ in the direction of $du$. Such an expression was computed by Behrndt in \cite{behrndt2011generalized}.
\begin{lemma} [See Lemma 4.11 in \cite{behrndt2011generalized}]
    The deformation vector field of $F_{0}=\phi\circ \gamma$ in the direction of $du$ is given by 
    \begin{align}\label{deformationfield}
        \der[]{s}\vert_{s=0}\phi\circ(\gamma+ d(su))= J(F_0)_*\nabla_{\gamma_{\varepsilon}} u+\hat{V}(du),
    \end{align}
    where $\hat{V}(du)$ is the tangential component of the variation field, and $\nabla_{\gamma_{\varepsilon}}$ is the gradient with respect to $\gamma^*g_{\varepsilon}$.
\end{lemma}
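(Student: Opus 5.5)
The plan is to compute the variation field directly in local coordinates on the cotangent bundle and then sort it into normal and tangential parts using the Lagrangian structure. Fix $x\in L$ and $t$, and write $\Gamma_s=\gamma+s\,du$, a family of closed $1$-forms. Then
\begin{align*}
\der[]{s}\Big\vert_{s=0}\phi\circ\Gamma_s(x)=\phi_*\big(\der[]{s}\vert_{s=0}\Gamma_s(x)\big)=\phi_*\big(du_x^{\mathrm{vert}}\big),
\end{align*}
where $du_x^{\mathrm{vert}}$ is the vertical vector in $T_{\gamma(x)}(T^*L)$ determined by the covector $du_x\in T^*_xL$. The vector is vertical because the base point stays at $x$.

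The key identity comes from the canonical symplectic form. For any vertical vector $\xi^{\mathrm{vert}}$ and any tangent vector $Y$ to the graph of $\gamma$ at $\gamma(x)$, we have
\begin{align*}
\omega_{can}(\xi^{\mathrm{vert}},Y)=-\xi(\pi_*Y),
\end{align*}
with the sign fixed by the convention of Lemma \ref{pullback}. Since $\phi^*\omega=\omega_{can}$, setting $W:=\phi_*(du^{\mathrm{vert}})$ gives, for every $v\in T_xL$,
\begin{align*}
\omega\big(W,(F_0)_*v\big)=-du(v).
\end{align*}
This is the same computation that underlies Lemma \ref{pullback}, applied to the $s$-derivative instead of the $t$-derivative.

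Next, decompose $W$ using the metric $g_\varepsilon=\omega(\cdot,J\cdot)$ of the relevant Calabi--Yau structure. Because $F_0(L)$ is Lagrangian, $J$ maps $T F_0(L)$ isomorphically onto the normal bundle. So we can write $W=J(F_0)_*Z+\hat V$, with $Z$ a vector field on $L$ and $\hat V$ tangent. Pairing with $(F_0)_*v$, the term $\omega(\hat V,(F_0)_*v)$ vanishes by the Lagrangian condition. The other term gives
\begin{align*}
\omega(J(F_0)_*Z,(F_0)_*v)=-\omega((F_0)_*Z,J(F_0)_*v)=-g_\varepsilon((F_0)_*Z,(F_0)_*v)=-\gamma^*g_\varepsilon(Z,v).
\end{align*}
Comparing with $-du(v)$ yields $\gamma^*g_\varepsilon(Z,\cdot)=du$, that is, $Z=\nabla_{\gamma_\varepsilon}u$. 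This gives (\ref{deformationfield}), with $\hat V(du)$ the tangential part, which depends linearly on $du$.

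The main obstacle is sign bookkeeping. One has to pin down the convention for $\omega_{can}$ and the identification of vertical vectors consistently with Lemma \ref{pullback}, and check that $\omega(\cdot,J\cdot)$ rather than $\omega(J\cdot,\cdot)$ is the metric. Getting these consistent is what makes the normal component come out as $+J\nabla u$.

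A secondary point is that $\hat V$ is not canonical, since it depends on the chosen neighbourhood $\phi$. The lemma claims nothing about it beyond tangentiality, so it needs no further identification.
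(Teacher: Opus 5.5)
Your proof is correct. The paper gives no argument of its own here; it imports the statement from Behrndt \cite{behrndt2011generalized}. So your computation serves as a self-contained verification of the cited lemma rather than a competing route.

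The route is the standard one:
\begin{enumerate}[(i)]
\item From $\phi^*\omega=\omega_{can}$ you get $\omega(W,(F_0)_*v)=-du(v)$ for the vertical variation $W=\phi_*(du^{\mathrm{vert}})$. Pointwise this is the same computation as Lemma \ref{pullback}, with $s$ in place of $t$.
\item The splitting $T_{F_0(x)}M=(F_0)_*T_xL\oplus J(F_0)_*T_xL$, forced by the Lagrangian condition, then reads off the normal part.
\end{enumerate}
Written this way, the argument visibly applies at an arbitrary closed $\gamma$ and for every $\varepsilon$ at once. This is because $\omega$ is fixed along the family, so $F_0$ stays Lagrangian and each $J_\varepsilon$ is $\omega$-compatible.

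Your sign bookkeeping also checks out. Take $\omega_{can}=\sum_i dx^i\wedge dp_i$, which is the sign implicit in Lemma \ref{pullback}, and the compatibility convention $\omega(v,Jv)>0$, i.e.\ $g_\varepsilon=\omega(\cdot,J\cdot)$. Then the normal component is $+J(F_0)_*\nabla_{\gamma_\varepsilon}u$. This is exactly what the paper's subsequent linearisation needs to produce $+\Delta_{\gamma_\varepsilon}u$, via $\iota_{J\nabla u}\Omega_\varepsilon=i\,\iota_{\nabla u}\Omega_\varepsilon$. Under the opposite compatibility convention the same computation gives $-J(F_0)_*\nabla_{\gamma_\varepsilon}u$. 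Since $\omega(J\cdot,\cdot)=-\omega(\cdot,J\cdot)$, only one of the two is positive definite. So the choice you flag is settled by which pairing is the metric, not by anything further in the proof.
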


With this in mind, we can compute the linearisation of $\theta_{\varepsilon}(\gamma+du)-\theta_0(\gamma)$ at $u=0$ with the same arguments one uses to prove that the Lagrangian angle of an LMCF evolves by the heat equation (see section 3.2 in \cite{wood2020singularities}, and section 5 in \cite{su2024infinite}). 

\begin{prop}
    The linearisation of $\theta_{\varepsilon}(\gamma+du)-\theta_0(\gamma)$ at $0$ is given by 
    \begin{align}
        \der[]{s}\vert_{s=0}\theta_{\varepsilon}(\gamma+d(su))=\Delta_{\gamma_{\varepsilon}}u+\langle \nabla_{\gamma_{\varepsilon}}\theta_{\varepsilon}(\gamma), \hat{V}(du)\rangle_{\gamma_{\varepsilon}}.
    \end{align}
\end{prop}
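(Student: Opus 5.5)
We compute the first variation of the Lagrangian angle $\theta_{\varepsilon}$ along the family of Lagrangians $F^s=\phi\circ(\gamma+d(su))$. For each $s$ the form $\gamma+d(su)$ is closed, so $F^s$ is Lagrangian for the fixed symplectic form $\omega$, hence Lagrangian in $(M,\omega,\Omega_{\varepsilon})$. Its angle is defined by $(F^s)^*\Omega_{\varepsilon}=e^{i\theta_{\varepsilon}(F^s)}vol_{F^s}$. The term $\theta_0(\gamma)$ does not depend on $s$, so it drops out. Let $X=J(F_0)_*\nabla_{\gamma_{\varepsilon}}u+\hat{V}(du)$ be the deformation field given by (\ref{deformationfield}). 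We split $X$ into its normal part $J(F_0)_*\nabla u$ and its tangential part $\hat V(du)$, and compute the contribution of each by differentiating $(F^s)^*\Omega_{\varepsilon}$ at $s=0$.

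\emph{Tangential part.} A tangential variation only reparametrizes the submanifold. Its effect on any pulled-back scalar quantity is the Lie derivative along the tangential field. So it contributes $d\theta_{\varepsilon}(\hat V)=\langle\nabla_{\gamma_{\varepsilon}}\theta_{\varepsilon}(\gamma),\hat V(du)\rangle_{\gamma_{\varepsilon}}$. Concretely, $\frac{d}{ds}(F^s)^*\Omega_{\varepsilon}=F_0^*\mathcal{L}_X\Omega_{\varepsilon}$, and the tangential piece gives $\mathcal{L}_{\hat V}(e^{i\theta}vol)$. Its imaginary part, divided by $e^{i\theta}vol$, is exactly $\hat V(\theta)$. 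The divergence term is real and only changes the volume.

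\emph{Normal part.} This is the standard computation. Since $\Omega_{\varepsilon}$ is closed, Cartan's formula gives $F_0^*\mathcal{L}_{JW}\Omega_{\varepsilon}=d\,F_0^*(\iota_{JW}\Omega_{\varepsilon})$ with $W=(F_0)_*\nabla u$. On a Lagrangian, $\Omega$ is of type $(n,0)$, so $\iota_{JW}\Omega=i\,\iota_W\Omega$. Restricting to $L$ gives $F_0^*\iota_{JW}\Omega_{\varepsilon}=i e^{i\theta_{\varepsilon}}\iota_{\nabla u}vol$. Then
\[
d\big(ie^{i\theta_{\varepsilon}}\iota_{\nabla u}vol\big)=ie^{i\theta_{\varepsilon}}\big(\Delta u + i\langle\nabla\theta_{\varepsilon},\nabla u\rangle\big)vol .
\]
Comparing with $\frac{d}{ds}(e^{i\theta}vol)=e^{i\theta}(i\dot\theta\, vol+\dot{vol})$, the imaginary part yields $\dot\theta=\Delta_{\gamma_{\varepsilon}}u$. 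The real part is the volume change $-\langle\nabla\theta,\nabla u\rangle vol$, which is consistent with $H=d\theta$ by Proposition \ref{lagangle}. All metric quantities are taken with respect to $\gamma^*g_{\varepsilon}$, because $F_0$ is being viewed inside $(M,g_{\varepsilon})$.

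Adding the two contributions gives the formula. The main point requiring care is the identity $F_0^*(\iota_{JW}\Omega_{\varepsilon})=i\,e^{i\theta_{\varepsilon}}\iota_{\nabla u}vol$. This is where the compatibility of $J_{\varepsilon}$, $g_{\varepsilon}$ and $\Omega_{\varepsilon}$ with the fixed $\omega$ is used. One also has to note that the $J$ in (\ref{deformationfield}) must be $J_{\varepsilon}$ and the gradient is taken in $\gamma^*g_{\varepsilon}$. Otherwise the argument is the same one used to show that $\theta$ evolves by the heat equation.
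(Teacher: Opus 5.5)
Your proposal is correct and follows essentially the same route as the paper. Like the paper, you differentiate $(F^s)^*\Omega_{\varepsilon}$ along the deformation field $J\nabla_{\gamma_{\varepsilon}}u+\hat V(du)$ via Cartan's formula, use $\iota_{JW}\Omega_{\varepsilon}=i\,\iota_W\Omega_{\varepsilon}$ to obtain $d\big(ie^{i\theta_{\varepsilon}}\iota_{\nabla u}\mathrm{vol}+e^{i\theta_{\varepsilon}}\iota_{\hat V}\mathrm{vol}\big)$, and read off $\dot\theta_{\varepsilon}$ from the imaginary part after factoring out $e^{i\theta_{\varepsilon}}$; handling the normal and tangential pieces separately is just the linearity of that single computation (minor point: $\iota_{JW}\Omega=i\,\iota_W\Omega$ holds because $\Omega$ is an $(n,0)$-form on all of $M$, not because of the Lagrangian condition).
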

\begin{proof}
    Suppose $\Phi_s$ denotes the flow associated to (\ref{deformationfield}). If we write $L_s=\phi(\gamma+d(su))$, and $\Omega_s=\Omega_{\varepsilon}\vert_{L_s}$ then
    \begin{align*}
        \der[\Omega_s]{s}\vert_{s=0}&=\der[]{s}(\Phi_0^*\Phi_s^*\Omega_{\varepsilon})\vert_{s=0}\\
        &=\Phi_0^*\der[]{s}\Phi_s^*\Omega_{\varepsilon}\vert_{s=0}\\
        &=\Phi_0^*(\mathcal{L}_{J\nabla_{\gamma_{\varepsilon}} u+\hat{V}(du)}\Omega_{\varepsilon})\\
        &=\Phi_0^*(d(\iota_{J\nabla_{\gamma_{\varepsilon}} u+\hat{V}(du)}\Omega_{\varepsilon}))\\
        &=d(ie^{i\theta_{\varepsilon}(\gamma)}\iota_{\nabla_{\gamma_{\varepsilon}} u}Vol_{\gamma_{\varepsilon}}+e^{i\theta_{\varepsilon}(\gamma)}\iota_{\hat{V}(du)}Vol_{\gamma_{\varepsilon}})\\
        &=ie^{i\theta_{\varepsilon}(\gamma)}d*_{\gamma_{\varepsilon}}du+ie^{i\theta_{\varepsilon}(\gamma)}\langle d\theta,\hat{V}(du)\rangle_{\gamma_{\varepsilon}} Vol_{\gamma_{\varepsilon}}+e^{i\theta_{\varepsilon}(\gamma)}(real\ terms)\\
    \end{align*}
    On the other hand 
    \begin{align*}
        \der[]{s}\Omega_s\vert_{s=0}= ie^{i\theta_{\varepsilon}(\gamma)}\der[]{s}\theta_{\varepsilon}(\gamma+d(su))\vert_{s=0}Vol_{\gamma_{\varepsilon}}+e^{i\theta_{\varepsilon}(\gamma)}(real\ terms)
    \end{align*}
    Comparing imaginary parts, after multiplying by $e^{i\theta_{\varepsilon}(\gamma)}$, it follows that 
    \begin{align*}
        \der[]{s}\theta_{\varepsilon}(\gamma+d(su))\vert_{s=0}Vol_{\gamma_{\varepsilon}}=d*_{\gamma_{\varepsilon}}du+\langle d\theta,\hat{V}(du)\rangle_{\gamma_{\varepsilon}} Vol_{\gamma_{\varepsilon}}.
    \end{align*}
    By the definition of the Hodge star, it follows that
     \begin{align*}
        \der[]{s}\vert_{s=0}\theta_{\varepsilon}(\gamma+d(su))=\Delta_{\gamma_{\varepsilon}}u+\langle \nabla_{\gamma_{\varepsilon}}\theta_{\varepsilon}(\gamma), \hat{V}(du)\rangle_{\gamma_{\varepsilon}}.
    \end{align*}
\end{proof}

The previous proposition tells us that we can write (\ref{perturbedproblem}) as a ``quasilinear" PDE, 
\begin{align}
    \der[u]{t}-\Delta_{\gamma_{\varepsilon}}u-\langle\nabla_{\gamma_{\varepsilon}}\theta_{\varepsilon}(\gamma),\hat{V}(du)\rangle_{\gamma_{\varepsilon}}=\theta_{\varepsilon}(\gamma)-\theta_0(\gamma)+Q_{\varepsilon}(u)+a(t),
\end{align}
where
\begin{align*}
    Q_{\varepsilon}(u)= \theta_{\varepsilon}(\gamma+du)-\theta_{\varepsilon}(\gamma)-\Delta_{\gamma_{\varepsilon}}u-\langle\nabla\theta_{\varepsilon}(\gamma),\hat{V}(du)\rangle_{\gamma_{\varepsilon}}.
\end{align*}

To solve this equation via fixed point methods, we need to have a good understanding of the existence theory for the linear operator
\begin{align*}
    \mathcal{L}(u)=\der[u]{t}-\Delta_{\gamma_{\varepsilon}}u-\langle\nabla_{\gamma_{\varepsilon}}\theta_{\varepsilon}(\gamma),\hat{V}(du)\rangle_{\gamma_{\varepsilon}}.
\end{align*}

Let us first define appropriate function spaces. In this case, since we are working with functions defined up to infinity in time, we will need to add a weight in time, so that our spaces have adequate compactness properties. 

Let $g_{\infty}$ be the Riemannian metric induced on $L$ by the Calabi--Yau structure at $\varepsilon=0$, when thinking of it as the immersion $f_{\infty}:L\rightarrow M$. Consider $\mu>1$, and $\alpha\in (0,\frac{1}{2})$. Given a time dependent tensor over $L$, consider the following norm and seminorms

\begin{align}
    \Norm{T}_{\mu}:=\sup_{(x,t)\in L\times [0,\infty)} t^{\mu}\norm{T}_{g_{\infty}},
\end{align}

\begin{align}
    [T]_{\mu,\alpha}:=\sup_{t\in [0,\infty)}\sup_{\substack{x_1,x_2\in L\\ d_{g_{\infty}}(x_1,x_2)<\min\{inj(g_{\infty}),1\}}}t^{\mu}\frac{\norm{T(x_1,t)-T(x_2,t)}_{g_{\infty}}}{d_{g_{\infty}}(x_1,x_2)^{2\alpha}},
\end{align}

\begin{align}
    \langle T\rangle_{\mu,\alpha}=\sup_{x\in L}\sup_{t>0}\sup_{\substack{t_1,t_2\in[t,2t]\\ 0<\norm{t_1-t_2}<1}} t^{\mu}\frac{\norm{T(x,t_1)-T(x,t_2)}_{g_{\infty}}}{\norm{t_1-t_2}^{\alpha}}, 
\end{align}

Here the difference $T(x_1,t)-T(x_2,t)$ is understood using the parallel transport along the unique shortest geodesic between $x_1$ and $x_2$ to compare the values. Notice that it is possible to consider this geodesic because $d_{g_{\infty}}(x_1,x_2)<inj(g_{\infty})$.  

Define a weighted parabolic H\"older norm for a tensor $T$ on $L$ by 
\begin{align}
    \Norm{T}_{P^{0,0,\alpha}_{\mu}}:=\Norm{T}_{\mu}+[T]_{\mu,\alpha}+\langle T\rangle_{\mu,\alpha}.
\end{align}
We then define the parabolic H\"older spaces $P^{l,k,\alpha}_{\mu}$ to be the space of functions $u: L\times [0,\infty)\rightarrow \mathbb{R}$ such that the norm
\begin{align}
    \Norm{u}_{P^{l,k,\alpha}_{\mu}}:=\sum_{\substack{0<i\leq k,0<j\leq l\\ 2i+j\leq l}}\Norm{\partial_t^i\nabla^j u}_{P^{0,0,\alpha}_{\mu}}
\end{align}
is finite. We denote the ball of radius $\delta>0$ centered at $0$ in $P^{l,k,\alpha}_{\mu}$ by $B_{\mu,\delta}^{l,k,\alpha}$.

Similarly we can define norms $\wholdernorm{\cdot}{l}{k}{}{}$, and  H\"older spaces $\wholdersp{l}{k}{}{}$ for differential forms. 

As mentioned earlier, to be able to set up the fixed point method, we will need good compactness properties for the H\"older spaces i.e. 
\begin{prop}[See Lemma 7.7 in \cite{su2024infinite}]\label{Schaudercompactness}
    Let $1<\mu' <\mu$, $0<\alpha'< \alpha<\frac{1}{2}$. The inclusion 
    \begin{align}
        P^{l,k,\alpha}_{\mu}\hookrightarrow P^{l,k,\alpha'}_{\mu'}
    \end{align}
    is compact. 
\end{prop}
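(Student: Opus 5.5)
The plan is to prove compactness by combining an Arzelà--Ascoli argument on compact time slabs with the extra decay that the gap $\mu'<\mu$ provides at large times. Take a bounded sequence $u_n$ in $P^{l,k,\alpha}_{\mu}$, say $\Norm{u_n}_{P^{l,k,\alpha}_{\mu}}\le 1$. We want a subsequence converging in $P^{l,k,\alpha'}_{\mu'}$.

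\textbf{Step 1: a locally convergent subsequence.} Fix $T>0$ and work on the slab $L\times[0,T]$. The bound on the $P^{l,k,\alpha}_\mu$ norm gives three things for every derivative $\partial_t^i\nabla^j u_n$ with $2i+j\le l$:
\begin{itemize}
\item it is uniformly bounded, with weight $t^\mu$;
\item it is uniformly $2\alpha$-H\"older in space;
\item it is $\alpha$-H\"older in time on dyadic windows $[t,2t]$.
\end{itemize}
Since $L$ is compact, Arzelà--Ascoli applies on each slab. A diagonal argument in $T\to\infty$ then yields a subsequence, still denoted $u_n$, such that all these derivatives converge uniformly on compact subsets of $L\times(0,\infty)$. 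The limit $u$ lies in $P^{l,k,\alpha}_\mu$ with norm at most $1$, because every weighted supremum is lower semicontinuous under pointwise convergence.

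\textbf{Step 2: convergence in the weaker norm.} Set $v_n=u_n-u$, so that $\Norm{v_n}_{P^{l,k,\alpha}_\mu}\le 2$. We must show $\Norm{v_n}_{P^{l,k,\alpha'}_{\mu'}}\to0$, and we split each seminorm into a large-time part and a compact-time part.

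\emph{Large times.} For $t\ge T$ we have $t^{\mu'}=t^{\mu}\,t^{\mu'-\mu}\le T^{\mu'-\mu}t^\mu$. Every weighted quantity over $t\ge T$ is therefore bounded by $2T^{\mu'-\mu}$. This handles $\Norm{\cdot}_{\mu'}$ directly. For the H\"older seminorms we also need to pass from $\alpha$ to $\alpha'$, using
\[
\frac{|w(x_1)-w(x_2)|}{d^{2\alpha'}}=\frac{|w(x_1)-w(x_2)|}{d^{2\alpha}}\,d^{2(\alpha-\alpha')}\le\frac{|w(x_1)-w(x_2)|}{d^{2\alpha}}.
\]
This holds because $d<1$, and the time seminorm is treated the same way since $|t_1-t_2|<1$. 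So the large-time contribution is at most $CT^{\mu'-\mu}$, which is small once $T$ is large.

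\emph{Compact times.} On $t\le 2T$ the weights are bounded. The sup part tends to zero by the uniform convergence from Step 1. The H\"older quotients we interpolate: for a scale $\delta>0$, pairs with $d\ge\delta$ are controlled by $2\delta^{-2\alpha'}\sup|v_n|$, which tends to zero, and pairs with $d<\delta$ are controlled by $\delta^{2(\alpha-\alpha')}[v_n]_{\alpha}\le C\delta^{2(\alpha-\alpha')}$. The temporal quotients are handled identically. Choosing first $T$, then $\delta$, then $n$ large gives the claim.

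\textbf{Main obstacle.} The delicate point is near $t=0$ together with the precise structure of the weight. Here $t^\mu$ vanishes at $t=0$ and the temporal seminorm is taken only over dyadic windows, so Step 1 gives equicontinuity only away from $t=0$. To control small times I would use that $t^{\mu'}\le t^{\mu}\,t^{\mu'-\mu}$ is not favourable there, since $\mu'-\mu<0$. Instead I would note that the weight is bounded by $1$ on $[0,1]$ and argue uniform convergence on $[0,1]$ via an estimate in $t$ inherited from the bounded derivatives $\partial_t^i\nabla^j u_n$ of higher order, or else use that the windows $[t,2t]$ accumulate only at $0$, where the weight $t^{\mu'}$ itself suppresses the quotient.

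If this is still problematic, I would follow Su's Lemma 7.7 and interpret the norm as weighted by $(1+t)^\mu$, which is presumably the intended meaning. In that case compact times cause no trouble and only the large-time tail matters, which the factor $T^{\mu'-\mu}$ controls.
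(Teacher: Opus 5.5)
Your large-time argument is right and is the standard one. The paper gives no proof of its own here; it only cites Lemma 7.7 of Su. The ingredients you use are exactly what is needed: the tail bound $t^{\mu'}\le T^{\mu'-\mu}t^{\mu}$ for $t\ge T$, the exponent trade via $d^{2(\alpha-\alpha')}\le 1$ and $|t_1-t_2|^{\alpha-\alpha'}\le 1$, and the two-scale splitting of H\"older quotients on a bounded time range.

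The gap is the small-time issue you flag, and your first two remedies do not close it. With the weight $t^{\mu}$ read literally, no remedy can work, because the statement is then false. Near $t=0$ one has $t^{\mu'}\ge t^{\mu}$, so the target norm is stronger there, not weaker. In other words, the weight $t^{\mu'}$ suppresses less than $t^{\mu}$, contrary to your second suggestion.

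Concretely, suppose $\mu-\mu'>\alpha$. Let $u(x,t)=\chi(t)t^{-\beta}$, where $\chi$ is a cutoff equal to $1$ on $[0,1]$ and $\mu'<\beta\le\mu-\alpha$. Then $u$ is constant in $x$, smooth for $t\ge 1/2$, and $t^{\mu}|u|\le 1$. On any window $[t,2t]\subset(0,1]$ the weighted temporal quotient is at most $\beta t^{\mu-\beta-\alpha}\le\beta$. Hence $u\in P^{0,0,\alpha}_{\mu}$, yet $\sup t^{\mu'}|u|=\infty$.

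Your first suggestion fails independently. The highest-order quantities $\partial_t^i\nabla^j u$ have no further time derivatives under control. So replacing $t^{\mu}$ by a weight bounded below near $0$, such as $(1+t)^{\mu}$, is forced rather than a fallback.

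Even with that weight, the claim that compact times ``cause no trouble'' still needs an argument. Step 1 as written only gives convergence on compact subsets of $L\times(0,\infty)$. Your compact-time estimate, however, uses $\sup_{t\le 2T}|v_n|\to 0$. The temporal seminorm only compares times lying in a common window $[t,2t]$, which shrinks as $t\to 0$, so equicontinuity at $t=0$ is not immediate.

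The missing step is a chaining estimate. Take $0<s<s'\le 1$ with $s'>2s$. Pass from $s'$ down to $s$ through the points $2^{-m}s'$, stopping at the first one lying in $(s,2s]$. Each consecutive pair then lies in a single dyadic window with separation less than $1$. Summing the geometric series gives $|w(s')-w(s)|\le C(s')^{\alpha}\le 2^{\alpha}C|s'-s|^{\alpha}$ for every $w=\partial_t^i\nabla^j u_n$, with $C$ uniform in $n$. The case $s'\le 2s$ is immediate from the seminorm.

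This gives uniform time-equicontinuity on $(0,T]$, and hence Arzel\`a--Ascoli on the whole slab rather than only away from $t=0$. After that, your Step 2 goes through unchanged.
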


The necessary linear theory can be summarized in the following theorem,

\begin{theorem}[Infinite-time existence theory for parabolic problems]\label{exttheory}
    Let $g_t$, $t\geq 0$, be a smooth family of Riemannian metrics in a closed manifold $L$ that converges to $g_{\infty}$. Let $f\in P^{0,0,\alpha}_{\mu}\cap\orthcomp{1}$, and suppose $V_{t}$ is a family of smooth vector fields over $L$, that is uniformly bounded. Consider also a vector field $\hat{V}(du)$ such that $\wholdernorm{\hat{V}(du)}{0}{0}{}{}\leq C\wholdernorm{\nabla u}{0}{0}{}{}$, for some $C>0$ that is independent of $u$. Then the problem 
    \begin{align}\label{linprob}
        \begin{cases}
            \Der[u]{t}-\Delta_{g_t}u-\langle V_t, \hat{V}(du)\rangle_{g_t}=f,\\
            u(\cdot,0)=0,
        \end{cases}
    \end{align}
    has a unique solution, and there is a constant $C>0$, such that $\Vert u\Vert_{P^{1,2,\alpha}_{\mu}}\leq C \Vert f\Vert_{P^{0,0,\alpha}_{\mu}}$.
\end{theorem}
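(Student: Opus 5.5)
Our plan is to prove Theorem \ref{exttheory} in three stages: short-time solvability, global solvability with a priori decay estimates, and finally weighted Schauder estimates. First, on any finite interval $[0,T]$, the operator
\[
u\mapsto \partial_t u-\Delta_{g_t}u-\langle V_t,\hat V(du)\rangle_{g_t}
\]
is a linear, uniformly parabolic operator whose first-order part is controlled by $\nabla u$, since $\hat V(du)$ depends linearly on $du$ with bounded coefficients and $V_t$ is uniformly bounded. Standard parabolic Schauder theory on the closed manifold $L$ therefore gives a unique solution $u\in C^{2+2\alpha,1+\alpha}(L\times[0,T])$ for every $T$. This yields a unique global solution, and uniqueness holds in the weighted class as well. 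What remains is the uniform-in-time weighted estimate $\Vert u\Vert_{P^{1,2,\alpha}_{\mu}}\le C\Vert f\Vert_{P^{0,0,\alpha}_\mu}$.

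Second, we would obtain decay at the $C^0$/$L^2$ level. The orthogonality assumption $f\in\orthcomp{1}$ is what kills the kernel of $\Delta$, namely the constants. We would multiply the equation by $u-\bar u$ (the mean with respect to $g_t$) and integrate, using the spectral gap of $\Delta_{g_t}$. This gap is uniform in $t$ because $g_t\to g_\infty$ smoothly. We would also use that, in our application, $V_t=\nabla\theta_\varepsilon(\gamma)$ decays exponentially by Proposition \ref{expconvergence}. Under the weaker hypothesis of mere boundedness, we would instead absorb the drift term for large $t$ by treating it as a small perturbation; this is the point where I would first try to exploit the actual decay of $V_t$. The result is a differential inequality $\frac{d}{dt}\Vert u-\bar u\Vert_{L^2}^2\le -\lambda\Vert u-\bar u\Vert^2_{L^2}+C\Vert f(t)\Vert_{L^2}^2$, and Duhamel then gives $\Vert u-\bar u\Vert_{L^2}(t)\lesssim t^{-\mu}\Vert f\Vert_{\mu}$, since convolving $e^{-\lambda t}$ with $t^{-\mu}$ again decays like $t^{-\mu}$. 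The mean $\bar u$ is controlled by integrating the equation: $\frac{d}{dt}\bar u$ involves only $\int f=0$ plus terms coming from the metric variation and the drift, which decay. This is also where the condition $\mu>1$ enters, since it makes the integrals converge.

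Third, we would upgrade these bounds to weighted Hölder norms via interior parabolic Schauder estimates on the time windows $[t,2t]$, or more precisely on unit-length cylinders $L\times[s-1,s+1]$ with $s\in[t,2t]$. On each cylinder the coefficients are uniformly bounded in $C^{\alpha}$, so we get
\[
\Vert u\Vert_{C^{2+2\alpha,1+\alpha}(L\times[s,s+1])}\le C\big(\Vert f\Vert_{C^{2\alpha,\alpha}(L\times[s-1,s+1])}+\Vert u\Vert_{C^0(L\times[s-1,s+1])}\big),
\]
with $C$ independent of $s$. Multiplying by $s^\mu\sim t^\mu$ and using the $C^0$ decay obtained from $L^2$ decay (via parabolic Moser or De Giorgi bounds, or Sobolev plus Schauder bootstrapping) gives the weighted bound. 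Near $t=0$ we use the global Schauder estimate with zero initial data.

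The main obstacle is the second step: the uniform decay estimate and the control of the mean $\bar u$. Both the drift term $\langle V_t,\hat V(du)\rangle$ and the time-dependence of $g_t$ can spoil the orthogonality $\int u\,dvol_{g_t}=0$ and the spectral gap argument. We would handle this by projecting onto $\orthcomp{1}$ at each time and treating the error terms perturbatively using the decay of $\partial_t g_t$ and of $V_t$.
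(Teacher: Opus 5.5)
\textbf{There is a genuine gap in your second step, at the control of the mean.} It cannot be closed the way you describe, because the constant mode of $u$ does not decay.

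Since $f\in\orthcomp{1}$, integrating the equation gives $\frac{d}{dt}\int_L u\,dvol_{g_t}=\int_L\langle V_t,\hat V(du)\rangle\,dvol_{g_t}+\int_L u\,\partial_t(dvol_{g_t})$. Integrability of the right-hand side (your use of $\mu>1$) only shows that $\bar u(t)$ converges to some constant $c$. It does not show $\bar u(t)=O(t^{-\mu})$; with $V_t$ merely bounded, even the tail $\int_t^\infty$ is only $O(t^{1-\mu})$.

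Moreover $c\neq0$ in general, even for an exponentially decaying drift and a fixed metric. On $S^1$ with the flat metric, take $V_t=\eta e^{-t}\sin x\,\partial_x$, $\hat V(du)=\nabla u$, and $f=\chi(t)\cos x$ with $0\le\chi\in C^\infty_c((1,2))$, $\chi\not\equiv0$. Then $\frac{d}{dt}\int_{S^1}u\,dx=-\eta e^{-t}\int_{S^1}u\cos x\,dx$. Hence $\int_{S^1}u\,dx\to-\pi\eta\int_0^\infty e^{-t}a(t)\,dt+O(\eta^2)\neq0$ for small $\eta\neq0$, where $a'+a=\chi$, $a(0)=0$. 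So $t^{\mu}|u|\to\infty$, and the estimate fails for $u$ itself.

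The paper's three-step sketch passes over the same point. Once the drift is moved to the right-hand side it is no longer orthogonal to $1$, so the heat-equation estimates do not apply to it, and interpolation cannot supply the missing $C^0$ bound.

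The missing idea is that constants are geometrically invisible. Only $du$ enters $\phi\circ(\gamma+du)$, and (\ref{perturbedproblem}) already carries a free function $a(t)$. So you should prove the estimate modulo constants: solve $\partial_t u-\Delta_{g_t}u-\langle V_t,\hat V(du)\rangle=f-b(t)$, $u(\cdot,0)=0$, with $b$ chosen so that $\int_L u\,dvol_{g_t}\equiv0$. Concretely, $u=\tilde u-\bar{\tilde u}$ and $b=\frac{d}{dt}\bar{\tilde u}$, where $\tilde u$ is the unprojected solution from your first step. Then there is no mean left to control, and $b$ drops out of your energy identity since $\int_L u\,b\,dvol_{g_t}=0$.

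\textbf{Second, your absorption of the drift does not follow from the hypotheses.} To absorb $\int_L u\langle V_t,\hat V(du)\rangle\,dvol_{g_t}$ into $-\Vert\nabla u\Vert_{L^2}^2$ through the spectral gap, you need roughly $C\sup_L|V_t|<\sqrt{\lambda_1}$, with $\lambda_1$ the uniform first nonzero eigenvalue. Uniform boundedness gives this at no time, large or small. Likewise, the decay of $\partial_t g_t$ you use is not implied by mere convergence of $g_t$. There are two ways to repair this.
\begin{enumerate}
\item Add the hypotheses that hold in the application. By Proposition \ref{expconvergence}, $V_t=\nabla_{\gamma_{\varepsilon}}\theta_{\varepsilon}(\gamma)$ and $\partial_t g_t$ decay exponentially, because $\gamma\to0$ and $f_\infty$ is special Lagrangian for $\Omega_\varepsilon$. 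You can then absorb for $t\ge T_0$ and use Gr\"onwall on $[0,T_0]$.
\item Keep mere boundedness. Write the drift as $\langle W_t,\nabla u\rangle$ (in the application it is pointwise linear in $du$). Replace the spectral gap by the Krylov--Safonov/Harnack oscillation decay $\operatorname{osc}_L w(t+1)\le(1-\delta)\operatorname{osc}_L w(t)$ for homogeneous solutions. This needs no smallness and, via Duhamel, gives $\sup_L|\tilde u-\bar{\tilde u}|\lesssim(1+t)^{-\mu}\Vert f\Vert_{\mu}$.
\end{enumerate}

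With these changes the rest of your route is sound, and it genuinely differs from the paper's. Finite-interval Schauder theory gives existence in place of the paper's continuity method. Your energy (or oscillation) decay replaces its separation of variables, and Schauder estimates on unit time windows replace its interpolation step.

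Finally, read the weight as $(1+t)^{\mu}$ throughout. With the literal $t^{\mu}$, data in $P^{0,0,\alpha}_{\mu}$ may fail to be integrable at $t=0$, and your convolution $\int_0^t e^{-\lambda(t-s)}s^{-\mu}\,ds$ diverges.
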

\begin{proof}
    The result follows from standard techniques in linear parabolic theory. The most complicated part is obtaining an a priori estimate, $\wholdernorm{u}{1}{2}{}{}\leq C\wholdernorm{f}{0}{0}{}{}$, for solutions of (\ref{linprob}). Once this has been done, the existence of solutions can be proven using the continuity method (see section 8.3 of \cite{wu2006elliptic}).

    To prove the a priori estimate, one can proceed in three steps: 
    \begin{enumerate}
        \item Using separation of variables, an analogous estimate can be proven for solutions of the heat equation, where the Laplacian is taken with respect to any fixed metric in the family.
        \item Given that the family of metrics $g_t$ converges to $g_{\infty}$, the previous step allows one to obtain an a priori estimate for solutions of the heat equation, when the Laplacian is taken with respect to a time-dependent family of metrics. 
        \item Finally, we can use interpolation inequalities satisfied by our H\"older spaces to obtain the desired a priori estimate from the ones obtained in the previous step. 
    \end{enumerate}
\end{proof}

\subsection{Estimates on the nonlinear terms}

The final ingredient to set up the fixed point method are good estimates on the zero order and higher order terms of the PDE. To do this we need to understand the behaviour of the Lagrangian angle. 

\begin{lemma}\label{lagestimate}
    Consider a Lagrangian neighbourhood $(\phi,U)$. Then given a $1$-form $\gamma\in P^{1,1,\alpha}_{\mu}$, in $\phi$, such that $\wholdernorm{\gamma}{1}{1}{}{}<A$, for $A>0$, then there is a $C>0$, $C:= C(A)$, that satisfies 
    \begin{align}
        \Norm{\sin\theta_{\varepsilon}(\gamma)}_{P^{0,0,\alpha}_{\mu}}\leq C(\sum_{
        j=1}^n\wholdernorm{\gamma}{1}{1}{}{}^{j}).
    \end{align}
    In particular this implies that if $\gamma\in P^{1,1,\alpha}_{\mu}$, then $\theta_{\varepsilon}(\gamma)\in P^{0,0,\alpha}_{\mu}$. 
\end{lemma}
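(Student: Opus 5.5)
The plan is to write $\theta_{\varepsilon}(\gamma)$, or rather $\sin\theta_{\varepsilon}(\gamma)$, explicitly in terms of $\gamma$ and $\nabla\gamma$ pointwise, and then read off the weighted H\"older bound term by term. We work in the Lagrangian neighbourhood $\phi:U\rightarrow V$. Since $f_{\infty}$ is special Lagrangian for $\Omega_{\varepsilon}$, we may normalise its phase so that $\theta_{\varepsilon}(0)=0$; in particular $\operatorname{Im}\Omega_{\varepsilon}$ restricts to zero on the zero section.

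Fix $x\in L$ and choose a local frame $e_1,\dots,e_n$ that is $g_{\infty}$-orthonormal at $x$. The tangent vectors of the graph $\phi\circ\gamma$ are $(\phi\circ\gamma)_*e_i = \phi_*(e_i+(\nabla_{e_i}\gamma)^{\mathrm{vert}})$, evaluated at the point $\gamma(x)$ in the fibre. This gives
\begin{align*}
    (\phi\circ\gamma)^*\Omega_{\varepsilon}(e_1,\dots,e_n)=\Psi_{\varepsilon}\big(x,\gamma(x),\nabla\gamma(x)\big),
\end{align*}
where $\Psi_{\varepsilon}$ is a smooth complex function, polynomial of degree at most $n$ in $\nabla\gamma$ by multilinearity of $\Omega_{\varepsilon}$, and smooth in the fibre variable $\gamma$. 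The volume form on the graph is $|\Psi_{\varepsilon}|$ times the reference volume, so
\begin{align*}
    \sin\theta_{\varepsilon}(\gamma)=\frac{\operatorname{Im}\Psi_{\varepsilon}(x,\gamma,\nabla\gamma)}{|\Psi_{\varepsilon}(x,\gamma,\nabla\gamma)|}.
\end{align*}
At $\gamma=0$ the numerator vanishes, because $f_{\infty}$ is special Lagrangian. So Taylor expansion in $(\gamma,\nabla\gamma)$, with smooth dependence on the fibre variable, gives
\begin{align*}
    \sin\theta_{\varepsilon}(\gamma)=\sum_{j=1}^n \mathcal{P}_j(x,\gamma;\nabla\gamma,\gamma),
\end{align*}
where each $\mathcal{P}_j$ is homogeneous of degree $j$ in $(\gamma,\nabla\gamma)$ and has coefficients smooth and bounded on compact subsets of $U$. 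The denominator is bounded below by a constant depending only on $A$, since $\Psi_{\varepsilon}$ is nonvanishing on graphs in $U$ by compactness. A norm bound on $\gamma$ that is uniform in time gives a uniform bound on $\gamma$ and $\nabla\gamma$, hence the coefficients are controlled by $C(A)$.

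It remains to estimate each term in $P^{0,0,\alpha}_{\mu}$. The point is that a product of $j\geq1$ factors, each bounded in $P^{1,1,\alpha}_{\mu}$, inherits the weight $t^{\mu}$ from one factor, while the other factors are controlled by their unweighted sup and H\"older norms. For $t\geq1$ the weighted norm dominates the unweighted one, since $\mu>0$. The range $t<1$ needs care because the weight degenerates there; on that range we use instead that the weight $t^{\mu}$ is at most $1$ and bound the factors directly. The spatial and time H\"older seminorms of a product are handled by the usual Leibniz-type inequality $[fg]\le \|f\|[g]+[f]\|g\|$. The smooth coefficients are Lipschitz in $(x,\gamma)$, so their seminorms are bounded by $C(A)(1+\|\gamma\|+[\gamma])$. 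Summing over $j$ gives $C(A)\sum_{j=1}^n\|\gamma\|^j_{P^{1,1,\alpha}_{\mu}}$.

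For the last assertion, the bound shows $\sin\theta_{\varepsilon}(\gamma)$ decays like $t^{-\mu}$. Choosing the lift of $\theta_{\varepsilon}$ continuously from the normalised value $0$, we get that $\theta_{\varepsilon}$ itself is small, and $\theta=\arcsin(\sin\theta)$ with $\arcsin$ smooth near $0$. The same composition estimate then puts $\theta_{\varepsilon}(\gamma)$ in $P^{0,0,\alpha}_{\mu}$.

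The main obstacle is the bookkeeping of the weights in the H\"older seminorms of products. We need to check that the time H\"older quotient on $[t,2t]$ lets a single factor carry $t^{\mu}$, and that the nonlinear dependence on the base point through $\gamma(x)$ in $U$ does not spoil this. The second point is handled by the mean value theorem in the fibre variable together with the uniform bound $A$.
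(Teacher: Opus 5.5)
Your route is essentially the paper's. There too, $(\phi\circ\gamma)^*\operatorname{Im}\Omega_{\varepsilon}$ is written in coordinates as $c_0(x,\varepsilon,\gamma)+\sum_{j=1}^n P_j(x,\varepsilon,\gamma,D\gamma)$, with $c_0(x,\varepsilon,0)=0$ and $P_j$ homogeneous of degree $j$ in $D\gamma$. The term $c_0$ is bounded by the mean value theorem and each $P_j$ by $\|\gamma\|^j_{P^{1,1,\alpha}_{\mu}}$. One small correction: only the numerator $\operatorname{Im}\Psi_{\varepsilon}$ has such a finite expansion. The factor $1/|\Psi_{\varepsilon}|$ is not polynomial in $\nabla\gamma$ and has to be carried as a separate bounded factor, which is in effect what you do.

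\textbf{The gap: the range $t<1$.} With the weight $t^{\mu}$ as defined, $\|\gamma\|_{P^{1,1,\alpha}_{\mu}}<A$ gives only $|\nabla\gamma(x,t)|\le At^{-\mu}$, and similarly for the H\"older quotients.
\begin{itemize}
\item Your claim that a norm bound uniform in time gives a uniform bound on $\nabla\gamma$ is therefore false near $t=0$.
\item ``Bounding the factors directly'' has nothing to draw on there. A degree-$j$ term only satisfies $t^{\mu}|\nabla\gamma|^{j}\le A^{j}t^{-(j-1)\mu}$, which is unbounded for $j\ge 2$.
\item The paper's assertion that $P_j$ is controlled by $\|\gamma\|^j$ passes over exactly the same point. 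It holds for $t\ge 1$, or for a weight such as $(1+t)^{\mu}$, and there your product argument works as written.
\end{itemize}

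\textbf{A fix for the literal weight.} Drop the polynomial expansion and apply the mean value theorem directly.
\begin{itemize}
\item Locally in canonical coordinates, $\sin\theta_{\varepsilon}$ is a smooth function $f(x,y,P)$ on $\overline{U}$ times the compact oriented Lagrangian Grassmannian.
\item The graph chart $p\mapsto\mathrm{graph}(p)$ has differential of norm at most $|\dot p|$. Hence $\partial_x f$, $\partial_y f$, $\partial_p f$, $\partial_y\partial_x f$ and $\partial_p\partial_x f$ are bounded uniformly in $p$.
\item Since $f(x,0,0)=0$, this gives $|f|+|\partial_x f|\le C(|y|+|p|)$.
\item Estimating the sup, spatial H\"older and temporal H\"older parts then gives $\|\sin\theta_{\varepsilon}(\gamma)\|_{P^{0,0,\alpha}_{\mu}}\le C\|\gamma\|_{P^{1,1,\alpha}_{\mu}}$ on all of $[0,\infty)$. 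This is stronger than the stated bound.
\end{itemize}

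Your $\arcsin$ argument for the final assertion has the same limitation: it only applies where $|\sin\theta_{\varepsilon}(\gamma)|$ is small, that is, for large $t$.
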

\begin{proof}
    Recall that $\sin \theta_{\varepsilon}(\gamma)=*_{{\gamma}_{\varepsilon}}\gamma^{*}Im\Omega_{\varepsilon}$. We can choose local coordinates $(x^1,\ldots, x^n)$ in $L$, and $(y^1,\ldots, y^{2n})$ in $T^*L$ so that $\gamma=(x^1,\ldots, x^n,\gamma_1,\ldots,\gamma_n)$ and $\Omega_{\varepsilon}=\Omega_{ij}dy^i\wedge dy^j$, then one can see that 
    \begin{align}
       \gamma^*Im\Omega_{\varepsilon}= [c_0(x,\varepsilon,\gamma)+\sum_{j=1}^n P_j(x,\varepsilon,\gamma,D\gamma)]dx^1\wedge\ldots\wedge dx^n,
    \end{align}
    where $c_0(x,\varepsilon,0)=0$, and $P_j$ can be seen as a homogeneous polynomial of degree $j$ on its third variable, with bounded coefficients. It follows that the norm of $P_j(x,\varepsilon,\gamma,D\gamma)$ can be controlled by $\Norm{\gamma}_{P_{\mu}^{1,1,\alpha}}^j$. Moreover, we can use the intermediate and mean value theorems, in order to prove that the norm of $c_0(x,\varepsilon,\gamma)$ can be controlled by $\wholdernorm{\gamma}{1}{1}{}{}$. 
\end{proof}

Using the previous lemma, we can prove that the zero order term lives in the adequate function space, and its smallness follows easily after that. 

\begin{prop}[Zero order estimate] \label{zeroorderest}If $\gamma$ represents a graphical LMCF that converges to a special Lagrangian, then $\gamma\in P^{1,1,\alpha}_{\mu}$, and $\theta_{\varepsilon}(\gamma)-\theta_0(\gamma)\in P^{0,0,\alpha}_{\mu}$. Moreover for $\delta>0$, if $\varepsilon$ is small enough then
\begin{align}
    \Vert \theta_{\varepsilon}(\gamma)-\theta_0(\gamma)\Vert_{P^{0,0,\alpha}_{\mu}}< \delta.
\end{align}
\end{prop}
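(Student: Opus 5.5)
The plan has three parts: show $\gamma$ decays exponentially, reduce $\theta_\varepsilon(\gamma)-\theta_0(\gamma)$ to bounds on $\sin\theta_\varepsilon$ and $\sin\theta_0$ via Lemma \ref{lagestimate}, and get smallness from smooth dependence on $\varepsilon$ together with a cut in time.

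\textbf{Step 1: $\gamma\in P^{1,1,\alpha}_{\mu}$.} By Proposition \ref{expconvergence}, $F_0(\cdot,t)$ converges exponentially fast in every $C^k$ norm to $f_\infty$. Since $f_\infty=\phi\circ 0$ and $\phi$ is a fixed diffeomorphism, this gives $\Norm{\gamma(\cdot,t)}_{C^k}\le C_k e^{-\lambda_k t}$. By Proposition \ref{IntegratedLMCF}, $\partial_t\gamma=d\theta_0(\gamma)=H$, so Proposition \ref{expconvergence} also gives $\Norm{\partial_t\gamma}_{C^k}\le C_ke^{-\lambda_kt}$. Since $t^\mu e^{-\lambda t}$ is bounded, every term in the weighted norm is finite:
\begin{itemize}
\item the supremum terms are controlled by the $C^k$ bounds;
\item the spatial Hölder seminorms are controlled by $C^1$ bounds;
\item the time Hölder seminorm on $[t,2t]$ is controlled by the mean value theorem and the exponential bound on $\partial_t\gamma$.
\end{itemize}

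\textbf{Step 2: the difference lies in $P^{0,0,\alpha}_\mu$.} Since $f_\infty$ is special Lagrangian for every $\varepsilon$, we can normalize the lifts so that $\theta_\varepsilon(0)=0$ (the phase is constant, and we absorb it). For $t$ large, $\gamma$ is small in $C^1$, so $\theta_\varepsilon(\gamma)$ stays near $0$ and $\theta=\arcsin(\sin\theta)$ with $\arcsin$ smooth and Lipschitz there. Lemma \ref{lagestimate} then puts both $\theta_\varepsilon(\gamma)$ and $\theta_0(\gamma)$ in $P^{0,0,\alpha}_\mu$. On a compact interval $[0,T_0]$ everything is smooth, so the weighted norm is trivially finite there. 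Hence the difference lies in $P^{0,0,\alpha}_\mu$.

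\textbf{Step 3: smallness.} Fix $\delta$. I would split $[0,\infty)$ into $[0,T]$ and $[T,\infty)$.
\begin{itemize}
\item \emph{Tail $[T,\infty)$.} The estimate in Lemma \ref{lagestimate} comes from $c_0(x,\varepsilon,0)=0$ and the polynomial structure, so it is uniform in $\varepsilon\in[0,\varepsilon_0/2]$. I would apply it with a slightly larger weight $\mu'>\mu$; Step 1 works for any weight, so $\gamma\in P^{1,1,\alpha}_{\mu'}$ as well. Then on $t\ge T$ the $\mu$-weighted norm of each angle is at most $C\,T^{\mu-\mu'}$. Alternatively, exponential decay gives directly that the $\mu$-weighted norm of $\gamma$ restricted to $[T,\infty)$ tends to $0$ as $T\to\infty$. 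Either way, each angle is at most $\delta/3$ in the tail, uniformly in $\varepsilon$.
\item \emph{Compact range $[0,T]$.} The quantity $\theta_\varepsilon(\gamma)-\theta_0(\gamma)$ is a smooth function of $(x,t,\varepsilon)$ on $L\times[0,T]\times[0,\varepsilon_0)$, because $\Omega_\varepsilon$ is a smooth family and $\gamma$ is smooth. It vanishes at $\varepsilon=0$, so its $C^1$ norm in $(x,t)$ is $O(\varepsilon)$. Since the weight $t^\mu\le T^\mu$ there and $C^1$ controls the Hölder seminorms, the weighted norm on $[0,T]$ is at most $C_T\varepsilon<\delta/3$ for $\varepsilon$ small.
\end{itemize}
The Hölder seminorms that straddle $t=T$ are handled by the same bounds: the time seminorm only compares times in $[t,2t]$, and the spatial seminorm works at a fixed time.

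\textbf{Main obstacle.} The delicate point is the uniformity in $\varepsilon$ of the constant in Lemma \ref{lagestimate}, together with the choice of lifts of $\theta_\varepsilon$ near $0$ via $\arcsin$. I would get this by tracking that the coefficients $c_0$ and $P_j$ depend smoothly on $\varepsilon$ on a compact $\varepsilon$-interval.
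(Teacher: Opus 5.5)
Your proposal is correct and follows the paper's route. Proposition \ref{expconvergence} gives $\gamma\in P^{1,1,\alpha}_{\mu}$, Lemma \ref{lagestimate} puts the angles in $P^{0,0,\alpha}_{\mu}$, and smallness comes from the smooth dependence of $\theta_{\varepsilon}(\gamma)=-i\ln(\gamma^*\Omega_{\varepsilon}/\mathrm{Vol}_{\gamma_{\varepsilon}})$ on $\varepsilon$. Your split into a compact time range and a tail bounded uniformly in $\varepsilon$ makes explicit the uniformity over infinite time that the paper's one-line appeal to $g_{\varepsilon}\to g_0$ leaves implicit. One small fix: run the compact estimate on $[0,2T]$ rather than $[0,T]$, so that the time seminorm over $[t,2t]$ is covered for every $t<T$.
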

\begin{proof}
    It follows from Proposition \ref{expconvergence} that $\gamma\in \wholdersp{1}{1}{}{}$. Then by Lemma \ref{lagestimate}, we know that $\theta_{\varepsilon}(\gamma)\in P^{0,0,\alpha}_{\mu}$. The smallness of $\Vert \theta_{\varepsilon}(\gamma)-\theta_0(\gamma)\Vert_{P^{0,0,\alpha}_{\mu}}$ follows from the convergence of the metrics $g_{\varepsilon}$ to $g_0$ as $\varepsilon\rightarrow 0$, because
    \begin{align}
        \theta_{\varepsilon}(\gamma)=-i\ln\big(\frac{\gamma^*\Omega_{\varepsilon}}{\text{Vol}_{\gamma_{\varepsilon}}}\big). 
    \end{align}
    
\end{proof}

Recall that 
\begin{align}\label{higherorder}
    Q_{\varepsilon}(v)= \theta_{\varepsilon}(\gamma+dv)-\theta_{\varepsilon}(\gamma)-\Delta_{\gamma_{\varepsilon}}v-\langle\nabla_{\gamma_{\varepsilon}}\theta_{\varepsilon}(\gamma),\hat{V}(dv)\rangle_{\gamma_{\varepsilon}}. 
\end{align}

To establish the higher order estimate we are going to use the following lemma, that is a direct application of Taylor's theorem with remainder. 

\begin{lemma}\label{taylorest}
    Let $f:V_1\times V_2\rightarrow W$ be a $C^{k+1}$ function between Banach spaces. Suppose that $D^j_{v_1}f\vert{(0,v_2)}=0$, for $j=0,\ldots, k$. Then there is $C>0$, and $\varepsilon>0$, such that if  $\Norm{v_2}<\varepsilon$, then
    \begin{align*}
        \Norm{f(v_1,v_2)}_{W}\leq C\Norm{v_1}_{V_1}^{k+1}. 
    \end{align*}
\end{lemma}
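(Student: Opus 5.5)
The plan is to apply Taylor's theorem with integral remainder to $v_1\mapsto f(v_1,v_2)$ at $v_1=0$, with $v_2$ held fixed. Since $f$ is $C^{k+1}$, for each fixed $v_2$ the map $v_1\mapsto f(v_1,v_2)$ is $C^{k+1}$ on $V_1$. Along the segment $s\mapsto sv_1$ it has the expansion
\begin{align*}
    f(v_1,v_2)=\sum_{j=0}^{k}\frac{1}{j!}D^j_{v_1}f\vert_{(0,v_2)}[v_1,\ldots,v_1]+\int_0^1\frac{(1-s)^k}{k!}D^{k+1}_{v_1}f\vert_{(sv_1,v_2)}[v_1,\ldots,v_1]\,ds .
\end{align*}
This is a statement about a Banach-space-valued function of one real variable, so it follows from the fundamental theorem of calculus, iterated. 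By hypothesis every term of the sum vanishes, so only the remainder is left.

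Next I bound the remainder. Since $D^{k+1}_{v_1}f$ is continuous at $(0,0)$, there are $\varepsilon>0$ and $\rho>0$ such that
\begin{align*}
    \Norm{D^{k+1}_{v_1}f\vert_{(w,v_2)}}\leq \Norm{D^{k+1}_{v_1}f\vert_{(0,0)}}+1=:(k+1)!\,C
\end{align*}
whenever $\Norm{w}<\rho$ and $\Norm{v_2}<\varepsilon$. Here the norm is the operator norm on $(k+1)$-linear maps. Inserting this into the integral gives
\begin{align*}
    \Norm{f(v_1,v_2)}_W\leq C\Norm{v_1}_{V_1}^{k+1}
\end{align*}
for $\Norm{v_1}<\rho$ and $\Norm{v_2}<\varepsilon$.

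The one real obstacle is uniformity. Continuity of the top derivative only gives a bound on a neighbourhood, so the estimate is local in $v_1$ as well as in $v_2$. In the application to $Q_\varepsilon$ this is harmless, because $v_1=dv$ lies in a small ball $B^{l,k,\alpha}_{\mu,\delta}$ and $v_2$ plays the role of $\varepsilon$ (or of $\gamma$). I would therefore read the lemma as holding for $\Norm{v_1}$ also sufficiently small, or else add the hypothesis that $D^{k+1}_{v_1}f$ is bounded on $V_1\times B_\varepsilon(0)$, in which case the estimate holds for all $v_1$.

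In the intended use, $k=1$. The conditions $Q_\varepsilon(0)=0$ and $D_vQ_\varepsilon\vert_{0}=0$ hold by the definition (\ref{higherorder}) together with the linearisation proposition. These would give the quadratic bound
\begin{align*}
    \Norm{Q_\varepsilon(v)}_{P^{0,0,\alpha}_{\mu}}\leq C\Norm{v}^2_{P^{1,2,\alpha}_{\mu}}.
\end{align*}
Checking that $Q_\varepsilon$ is $C^2$ as a map between the weighted spaces is the substantive work, but it belongs to the later proposition, not to this lemma.
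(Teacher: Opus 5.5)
Your proof is correct and follows the route the paper indicates; the paper gives no proof beyond calling the lemma a direct application of Taylor's theorem with remainder. Like that route, you use the integral-remainder expansion along $s\mapsto sv_1$, note that all terms up to order $k$ vanish, and bound $D^{k+1}_{v_1}f$ near $(0,0)$.

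Your caveat about the hypotheses is also justified. As literally stated, with no smallness of $\Norm{v_1}$, the lemma is false: $f(x,y)=x^{k+1}e^{x}$ on $\mathbb{R}\times\mathbb{R}$ is a counterexample. So the local version you prove is the right statement, and it is all that Proposition \ref{higherorderest} needs, since there $v$ ranges over a small ball.
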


\begin{prop}[Quadratic and higher order estimate]\label{higherorderest}
    For small enough $\delta>0$, if $v\in B^{1,2,\alpha}_{\mu, \delta}$, then $Q_{\varepsilon}(v)\in P^{0,0,\alpha}_{\mu}$. Moreover, there exists $\varepsilon>0$ such that if $\varepsilon'<\varepsilon$, 
    \begin{align}
        \Vert Q_{\varepsilon'}(v)\Vert_{P^{0,0,\alpha}_{\mu}} \leq C\Vert v\Vert_{P^{1,2,\alpha}_{\mu}}^2.
    \end{align}
    for some $C>0$.   
\end{prop}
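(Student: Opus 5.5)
The plan is to treat $Q_{\varepsilon'}(v)$ as the second-order Taylor remainder of the map $v\mapsto\theta_{\varepsilon'}(\gamma+dv)$ at $v=0$. Lemma \ref{taylorest} is then applied with $k=1$, taking $v_1=v\in P^{1,2,\alpha}_{\mu}$ and letting $v_2$ carry the parameter $\varepsilon'$.

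\textbf{Step 1: the operator is well defined and $C^2$ in weighted spaces.} Write $\Theta(v,\varepsilon'):=\theta_{\varepsilon'}(\gamma+dv)-\theta_{\varepsilon'}(\gamma)$. Work in the local coordinates from the proof of Lemma \ref{lagestimate}. There $\theta_{\varepsilon'}(\gamma+dv)$ is a smooth function $G(x,\varepsilon',\gamma+dv,D\gamma+D^2v)$ of the point, the parameter, the $1$-form and its first derivative. The function $G$ comes from $-i\ln$ of the ratio $(\gamma+dv)^*\Omega_{\varepsilon'}/\mathrm{Vol}$.

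Since $v\in B^{1,2,\alpha}_{\mu,\delta}$, the $1$-form $dv$ lies in the ball of radius $\delta$ of $P^{1,1,\alpha}_{\mu}$. For $\delta$ small, $\gamma+dv$ stays in the Lagrangian neighbourhood $U$, and the arguments of $G$ range over a fixed compact set.

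The key structural point is that $\Theta(0,\varepsilon')=0$. So every term of $\Theta$ carries at least one factor of $dv$ or $D^2v$, each of which has $t^{\mu}$ decay. The remaining factors are bounded in the unweighted parabolic H\"older norm, because $\gamma$ and its derivatives are bounded (Proposition \ref{expconvergence}). This gives $\Theta(v,\varepsilon')\in P^{0,0,\alpha}_{\mu}$. For the H\"older seminorms, I would use the fact that $t^{\mu}$ is comparable on $[t,2t]$, together with the product estimate $\|fg\|_{P^{0,0,\alpha}_{\mu}}\le C\|f\|_{C^{\alpha}}\|g\|_{P^{0,0,\alpha}_{\mu}}$. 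The linear terms $\Delta_{\gamma_{\varepsilon'}}v$ and $\langle\nabla\theta_{\varepsilon'}(\gamma),\hat V(dv)\rangle$ lie in $P^{0,0,\alpha}_{\mu}$ by the same product estimate. Hence $Q_{\varepsilon'}(v)\in P^{0,0,\alpha}_{\mu}$.

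Smoothness of the map $(v,\varepsilon')\mapsto Q_{\varepsilon'}(v)$ from $B^{1,2,\alpha}_{\mu,\delta}\times[0,\varepsilon_0)$ to $P^{0,0,\alpha}_{\mu}$ follows from the same reasoning applied to difference quotients, since $G$ is smooth with derivatives bounded on the compact set.

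\textbf{Step 2: vanishing to first order.} By construction $Q_{\varepsilon'}(0)=0$. By the linearisation proposition, the derivative $D_vQ_{\varepsilon'}|_{v=0}$ equals $\Delta_{\gamma_{\varepsilon'}}v+\langle\nabla\theta_{\varepsilon'}(\gamma),\hat V(dv)\rangle$ minus exactly this expression, so it is $0$. Lemma \ref{taylorest} with $k=1$ then gives $\|Q_{\varepsilon'}(v)\|_{P^{0,0,\alpha}_\mu}\le C\|v\|^2_{P^{1,2,\alpha}_\mu}$ for $\varepsilon'$ small, with $C$ uniform in $\varepsilon'$.

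For a more explicit route, write the remainder as
\begin{align*}
Q_{\varepsilon'}(v)=\int_0^1(1-s)\,D^2_v\Theta(sv,\varepsilon')[v,v]\,ds .
\end{align*}
The second derivative is a sum of terms bilinear in $(dv,D^2v)$ with bounded smooth coefficients. Such a term has decay $t^{2\mu}\ge t^{\mu}$ for $t\ge1$. For $t\le1$ both weights are bounded, so it suffices to bound one factor by its weighted norm and the other by its sup norm.

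\textbf{Main obstacle.} The delicate point is the weight. The norm in $P^{0,0,\alpha}_{\mu}$ controls $t^{\mu}|\cdot|$, while a product of two weighted quantities is naturally controlled by $t^{-2\mu}$. One needs $\sup_t t^{-\mu}\le C$, and near $t=0$ the weight $t^{\mu}$ degenerates, so this requires care.

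I would handle it by proving separately the unweighted bound $\|f\|_{C^0}\le C\|f\|_{P^{0,0,\alpha}_{\mu}}$ and its H\"older analogue. Near $t=0$ these use the time H\"older seminorm on dyadic intervals together with the initial condition $v(\cdot,0)=0$. If that fails, I would instead use a weight such as $(1+t)^{\mu}$, for which the interpretation is immediate.

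Uniformity in $\varepsilon'$ comes from the smooth dependence of $\Omega_{\varepsilon'}$ and $g_{\varepsilon'}$ on $\varepsilon'$ over the compact set $[0,\varepsilon]$.
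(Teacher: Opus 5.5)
Your Steps 1 and 2 are the paper's proof, which consists of exactly the two citations you use. Membership of $Q_{\varepsilon'}(v)$ comes from the coordinate expansion of Lemma \ref{lagestimate}. The quadratic bound comes from Lemma \ref{taylorest} with $k=1$, once $Q_{\varepsilon'}(0)=0$ and $D_vQ_{\varepsilon'}|_{v=0}=0$ are read off from the linearisation proposition.

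The weight obstacle you raise is genuine, and the paper passes over it. It already affects your Step 1: for $v\in B^{1,2,\alpha}_{\mu,\delta}$ the norm only gives $|dv|\le\delta t^{-\mu}$, which does not keep $\gamma+dv$ inside $U$ near $t=0$. Your first remedy, however, cannot work.

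\emph{The unweighted bound is false.} The bound $\|f\|_{C^0}\le C\|f\|_{P^{0,0,\alpha}_{\mu}}$ fails in general. Take a cutoff $\chi$ equal to $1$ on $[0,1]$ and $0$ on $[2,\infty)$, and set $f=t^{\alpha-\mu}\chi(t)$. Then $t^{\mu}|f|\le 2^{\alpha}$. By the mean value theorem and $|t_1-t_2|\le t$, one gets $t^{\mu}|f(t_1)-f(t_2)|\le(\mu-\alpha)|t_1-t_2|^{\alpha}$ for $t_1,t_2\in[t,2t]$ and $t\le 1/2$. So $f\in P^{0,0,\alpha}_{\mu}$, yet $f$ is unbounded and $t^{\mu}f^{2}=t^{2\alpha-\mu}\to\infty$. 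In your dyadic telescoping, the increment over $[2^{-k-1},2^{-k}]$ is only bounded by a multiple of $2^{k(\mu-\alpha)}$, which is not summable.

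\emph{The initial condition does not help.} The weighted norms do not force $\nabla^2 v$ to extend continuously to $t=0$. On a flat torus, take $v=\sum_{j\ge1}\psi(2^{j}t)\,t^{a}\sin(k_jx_1)$, with a fixed bump $\psi$ supported in $(3/4,3/2)$, $\psi(1)=1$, and integers $k_j\approx 2^{jb}$. For instance, with $\mu=3/2$, $\alpha=1/4$, $a=1/10$, $b=1/2$, one checks that $v\in P^{1,2,\alpha}_{\mu}$ and $|v|\le t^{a}\to 0$. However, $\sup_x t^{\mu}|\nabla^{2}v|^{2}\approx t^{-3/10}$ along $t=2^{-j}$, so it is unbounded.

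So you must commit to your fallback: the weight $(1+t)^{\mu}$, or equivalently ordinary parabolic H\"older norms on $[0,1]$ and the weight $t^{\mu}$ only for $t\ge 1$. With that weight the following all hold:
\begin{itemize}
\item $|f|\le\|f\|$ pointwise.
\item The product estimate $\|fg\|\le C\|f\|\,\|g\|$ is immediate, since the weights at $t_1,t_2\in[t,2t]$ are comparable.
\item $\gamma+dv$ stays in $U$ for small $\delta$.
\item The composition map $v\mapsto\theta_{\varepsilon'}(\gamma+dv)$ is $C^{2}$ between the weighted spaces.
\end{itemize}
Lemma \ref{taylorest} then applies exactly as you describe, with $C$ uniform in $\varepsilon'$. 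Be aware that this changes the function spaces. Proposition \ref{Schaudercompactness} and Theorem \ref{exttheory} therefore have to be used for this weight, which is presumably the setting intended by the cited sources.
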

\begin{proof}
    It follows from Lemma \ref{lagestimate} that $Q_{\varepsilon}(v)\in \wholdersp{0}{0}{}{}$. The existence of the required constant follows from Lemma \ref{taylorest}.  
\end{proof}

\subsection{Applying Schauder fixed point Theorem}

We now put everything together to use the Schauder fixed point Theorem. 

We define an iteration map $\mathcal{F}: B^{1,2,\alpha}_{\mu,\delta}\rightarrow P^{1,2,\alpha}_{\mu}$ in the following way: to a given $v\in B^{1,2,\alpha}_{\delta,\mu}$ we associate $\mathcal{F}(v)=u\in P^{1,2,\alpha}_{\mu}$ such that $u$ is the unique solution of
\begin{align}
    \begin{cases}
        \der[u]{t}-\Delta_{\gamma_{\varepsilon}}u-\langle \nabla_{\gamma_{\varepsilon}}\theta_{\varepsilon}(\gamma),\hat{V}(du)\rangle_{\gamma_{\varepsilon}}=\theta_{\varepsilon}(\gamma)-\theta_0(\gamma)+Q_{\varepsilon}(v)-a(t),\\
        u(\cdot,0)=0,
    \end{cases}
\end{align}
where $a(t)=\int \theta_{\varepsilon}(\gamma)-\theta_0(\gamma)+Q_{\varepsilon}(v) \text{ Vol}_{\gamma_{\varepsilon}}$. Notice that we take $a(t)$ of this form so that we are able to use Theorem \ref{exttheory}. Moreover, we are using that $V_t=\nabla_{\gamma_{\varepsilon}}\theta_{\varepsilon}(\gamma)$, and $\hat{V}(du)$ satisfy the hypothesis of the theorem. This follows because $\gamma$ is a smooth LMCF that converges to a special Lagrangian, and because $\hat{V}(du)$ is the tangential component of $ \der[]{s}\vert_{s=0}\phi\circ(\gamma+ d(su))$.

We are going to use the following version of the Schauder fixed point Theorem (see Theorem 11.1 in \cite{gilbarg1977elliptic}), 

 \begin{theorem}[Schauder fixed point theorem]
     Let $K$ be a nonempty convex closed subset of a Banach space $V$. If $\mathcal{F}$ is a continuous map of $K$ into itself such that $\mathcal{F}(K)$ is contained in a compact subset of $K$, then $\mathcal{F}$ has a fixed point. 
 \end{theorem}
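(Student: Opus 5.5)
The plan is to reduce the statement to Brouwer's fixed point theorem by approximating $\mathcal{F}$ with maps that take values in finite-dimensional convex sets. These approximations use Schauder projections.

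\textbf{Step 1: Schauder projections.} Let $S\subset K$ be a compact set with $\mathcal{F}(K)\subset S$; replacing $S$ by its closure, we may assume $S$ is closed. Fix $\eta>0$. By compactness, choose finitely many points $x_1,\ldots,x_N\in S$ such that the balls $B(x_i,\eta)$ cover $S$. Define $\varphi_i(x)=\max\{0,\eta-\Norm{x-x_i}\}$ and
\begin{align*}
P_\eta(x)=\frac{\sum_{i=1}^N\varphi_i(x)\,x_i}{\sum_{i=1}^N\varphi_i(x)},\qquad x\in S.
\end{align*}
The denominator is positive on $S$ because the balls cover $S$, so $P_\eta$ is continuous on $S$. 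Its image lies in the convex hull $K_\eta=\mathrm{conv}\{x_1,\ldots,x_N\}$, and $K_\eta\subset K$ since $K$ is convex. Moreover $P_\eta(x)$ is a convex combination of points $x_i$ with $\Norm{x-x_i}<\eta$, so $\Norm{P_\eta(x)-x}<\eta$ for every $x\in S$.

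\textbf{Step 2: finite-dimensional fixed points.} The map $P_\eta\circ\mathcal{F}$ is continuous from $K_\eta$ to $K_\eta$. The set $K_\eta$ is a compact convex subset of the finite-dimensional space $\mathrm{span}\{x_1,\ldots,x_N\}\cong\mathbb{R}^m$. Brouwer's theorem applies to such sets: a compact convex set in $\mathbb{R}^m$ is a retract of a large closed ball, for instance via the nearest-point projection. Alternatively, it is homeomorphic to a closed ball of its own dimension, using radial projection from a relative interior point. Either way we obtain $x_\eta\in K_\eta\subset K$ with $P_\eta(\mathcal{F}(x_\eta))=x_\eta$. Since $\mathcal{F}(x_\eta)\in S$, Step 1 gives $\Norm{\mathcal{F}(x_\eta)-x_\eta}<\eta$.

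\textbf{Step 3: passing to the limit.} Take $\eta=1/k$. The points $\mathcal{F}(x_{1/k})$ lie in the compact set $S$, so a subsequence converges to some $x\in S\subset K$. Along this subsequence $\Norm{x_{1/k}-\mathcal{F}(x_{1/k})}<1/k$, hence $x_{1/k}\to x$ as well. Continuity of $\mathcal{F}$ on $K$ then gives $\mathcal{F}(x)=\lim\mathcal{F}(x_{1/k})=x$.

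The only nontrivial input is Brouwer's theorem for a general compact convex subset of $\mathbb{R}^m$ rather than for a ball. I expect this to be the main point needing care, and the retraction argument in Step 2 handles it. The remaining steps are routine once the continuity and the $\eta$-closeness of $P_\eta$ are checked.
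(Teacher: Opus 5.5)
Your argument is correct. It is the standard Schauder-projection-plus-Brouwer proof; the paper gives no proof of its own and simply cites Gilbarg--Trudinger, where essentially this argument is used. The only difference from the textbook route is the following. There, the compact convex case is proved first, and the stated form (closed convex $K$, image inside a compact set) is then deduced by passing to the closed convex hull of the image, which is compact by Mazur's theorem. You instead project the compact image $S$ directly onto $\mathrm{conv}\{x_1,\ldots,x_N\}\subset K$. This sidesteps Mazur's theorem and in fact uses neither the closedness of $K$ nor the completeness of $V$.
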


 In order to use the theorem to find a fixed point of $\mathcal{F}$, we need the following proposition

\begin{prop}
    For any $\mu'<\mu$, $\alpha'<\alpha$, the set $B^{1,2,\alpha}_{\mu,\delta}$ is compactly contained in $P^{1,2,\alpha'}_{\mu'}$. Moreover, for small enough $\delta>0$ the iteration map $\mathcal{F}:B^{1,2,\alpha}_{\mu, \delta}\rightarrow P^{1,2,\alpha}_{\mu}$ is continuous with respect to the norm on $P^{1,2,\alpha'}_{\mu'}$, and has image lying in $B^{1,2,\alpha}_{\mu,\delta}$. 
\end{prop}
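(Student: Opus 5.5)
The proposition makes three claims: compactness of the ball, the self-map property, and continuity. I would establish them in that order.

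\emph{Compactness.} This follows directly from Proposition \ref{Schaudercompactness} with $(l,k)=(1,2)$ and $1<\mu'<\mu$, $\alpha'<\alpha$. The ball $B^{1,2,\alpha}_{\mu,\delta}$ is bounded in $P^{1,2,\alpha}_{\mu}$, so its image under the compact inclusion is precompact in $P^{1,2,\alpha'}_{\mu'}$. I would also check that it is closed there. Given a sequence $v_n\in B^{1,2,\alpha}_{\mu,\delta}$ converging in $P^{1,2,\alpha'}_{\mu'}$ to $v$, the convergence is pointwise together with the relevant derivatives. Each weighted difference quotient of $v$ is then a pointwise limit of the corresponding quotients of $v_n$, each bounded by $\delta$, so lower semicontinuity of the sup-type seminorms gives $\Vert v\Vert_{P^{1,2,\alpha}_{\mu}}\leq\delta$. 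Hence the ball is a closed, convex, compact subset $K$ of the Banach space $P^{1,2,\alpha'}_{\mu'}$, which is the setting needed for the Schauder fixed point theorem.

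\emph{Image in the ball.} Take $v\in B^{1,2,\alpha}_{\mu,\delta}$. First, the right-hand side $f=\theta_\varepsilon(\gamma)-\theta_0(\gamma)+Q_\varepsilon(v)-a(t)$ lies in $\orthcomp{1}$ by the choice of $a(t)$. I would also check that $a\in P^{0,0,\alpha}_{\mu}$ with $\Vert a\Vert\leq C\Vert\theta_\varepsilon(\gamma)-\theta_0(\gamma)+Q_\varepsilon(v)\Vert_{P^{0,0,\alpha}_{\mu}}$, which holds by integrating over the compact manifold $L$ against volume forms that are uniformly bounded and H\"older in $t$. Theorem \ref{exttheory} then applies, with $V_t=\nabla_{\gamma_\varepsilon}\theta_\varepsilon(\gamma)$ uniformly bounded. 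Combining it with Proposition \ref{zeroorderest} and Proposition \ref{higherorderest} gives
\begin{align*}
\Vert\mathcal{F}(v)\Vert_{P^{1,2,\alpha}_{\mu}}\leq C_1\big(\Vert\theta_\varepsilon(\gamma)-\theta_0(\gamma)\Vert_{P^{0,0,\alpha}_{\mu}}+C_2\delta^2\big).
\end{align*}
Choose $\delta$ small enough that $C_1C_2\delta\leq\frac12$. Then choose $\varepsilon$ small enough that the zero-order term is below $\delta/(2C_1)$. With these choices the bound is at most $\delta$. A uniformity point needs care here: the constant $C_1$ from Theorem \ref{exttheory} must be independent of $\varepsilon$ for small $\varepsilon$. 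I would argue this holds because the metrics $\gamma^*g_\varepsilon$ and the vector fields $V_t$ depend smoothly on $\varepsilon$ and converge as $t\to\infty$ uniformly in $\varepsilon$. The same uniformity is needed for the constant $C_2$ of Proposition \ref{higherorderest}.

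\emph{Continuity in the weaker norm.} This is the main obstacle. Let $v_n\to v$ in $P^{1,2,\alpha'}_{\mu'}$ with $v_n,v\in K$. By linearity, $\mathcal{F}(v_n)-\mathcal{F}(v)$ solves the linear problem with right-hand side $Q_\varepsilon(v_n)-Q_\varepsilon(v)$ minus its mean. I would apply Theorem \ref{exttheory} in the primed spaces, since its proof is valid for any $\mu'>1$ and $\alpha'<\frac12$. This reduces the problem to showing $\Vert Q_\varepsilon(v_n)-Q_\varepsilon(v)\Vert_{P^{0,0,\alpha'}_{\mu'}}\to0$. For that I would write $Q_\varepsilon(v_n)-Q_\varepsilon(v)$ as $\int_0^1 DQ_\varepsilon(v+s(v_n-v))\,ds\,(v_n-v)$. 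The map $Q_\varepsilon$ is smooth in $(x,dv,\nabla dv)$ with $DQ_\varepsilon(0)=0$, so on the small ball this derivative is a bounded multiplication-type operator in the primed Hölder norms; the argument is the one used for Lemma \ref{lagestimate}. This gives a bound $C\delta\Vert v_n-v\Vert_{P^{1,2,\alpha'}_{\mu'}}$, which is Lipschitz continuity and hence continuity.

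The delicate issue in this step is that products of weighted quantities carry weights $t^{2\mu'}$. Since only one factor is the difference $v_n-v$, I would place the weight on that factor and use the unweighted boundedness of the coefficients, which follows because $\mu'>1$ and the norms dominate the sup norms for $t\geq1$. The region near $t=0$ requires a separate check that the coefficients remain bounded there. Once this step is done, the Schauder fixed point theorem applies.
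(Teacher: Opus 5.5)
Your compactness and self-map steps match the paper's (Proposition \ref{Schaudercompactness}, then Theorem \ref{exttheory} with Propositions \ref{zeroorderest} and \ref{higherorderest}). You also make explicit two points the paper leaves tacit: closedness of the ball in the weaker topology, and uniformity in $\varepsilon$ of the constants.

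For continuity you take a genuinely different route. The paper argues by contradiction, as in \cite{brendle2017gluing}. If $u_j\to u$ in $P^{1,2,\alpha'}_{\mu'}$ while $\mathcal{F}(u_j)$ stays away from $\mathcal{F}(u)$, compactness of the ball gives a subsequence with $\mathcal{F}(u_j)\to v$ in the weak norm. The paper then asserts $v=\mathcal{F}(u)$, which amounts to passing to the limit in the equation and invoking uniqueness for the linear problem. That soft argument needs only local convergence $Q_\varepsilon(u_j)\to Q_\varepsilon(u)$, uniqueness, and the closedness you check.

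Your argument instead needs two things: the linear estimate in the primed spaces, and a difference estimate $\Norm{Q_\varepsilon(v_1)-Q_\varepsilon(v_2)}_{P^{0,0,\alpha'}_{\mu'}}\leq C\delta\Norm{v_1-v_2}_{P^{1,2,\alpha'}_{\mu'}}$. Proposition \ref{higherorderest} does not provide this, since it only bounds $Q_\varepsilon(v)$ itself. It requires the weighted product estimates you sketch: the weight goes on the difference factor, and the coefficients get unweighted H\"older bounds. In return your argument is quantitative. Run in the unprimed norms, the same estimate makes $\mathcal{F}$ a contraction on $B^{1,2,\alpha}_{\mu,\delta}$ for small $\delta$. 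Banach's fixed point theorem would then suffice, with uniqueness in the ball, and the compact-embedding step becomes unnecessary.

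On the region near $t=0$ that you defer: with the weight literally $t^{\mu}$, the norms give no control as $t\to0$, so that check cannot be carried out. The paper's Lemma \ref{lagestimate}, Proposition \ref{higherorderest}, and even the well-definedness of $\mathcal{F}$ tacitly need the same bound. With a weight bounded below near $t=0$, such as $(1+t)^{\mu}$, the weighted norms dominate the unweighted ones on all of $[0,\infty)$ and your check is immediate.
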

\begin{proof}
    The first statement follows from our compactness result, Proposition \ref{Schaudercompactness}. That the image lies in $B^{1,2,\alpha}_{\mu,\delta}$ for small enough $\delta$, follows from the linear theory (Theorem \ref{exttheory}) and the estimates on the zeroth (Proposition \ref{zeroorderest}) and higher order terms (Proposition \ref{higherorderest}). The continuity of the map with respect  to the norm on $P^{1,2,\alpha'}_{\mu'}$ can be proven by a contradiction argument as in \cite{brendle2017gluing} (see Proposition 7.3). Suppose that $\mathcal{F}$ is not continuous with respect to this norm, then there is a sequence $u_j\in B_{\mu,\delta}^{1,2,\alpha}$ and $u\in B_{\mu,\delta}^{1,2,\alpha}$ such that
    \begin{align*}
        \lim_{j\rightarrow \infty}\Norm{u_j-u}_{P^{1,2,\alpha'}_{\mu'}}=0,
    \end{align*}
    but
    \begin{align*}
        \liminf_{j\rightarrow\infty}\Norm{\mathcal{F}(u_j)-\mathcal{F}(u)}_{P^{1,2,\alpha'}_{\mu'}}>0. 
    \end{align*}
    Since $F(u_j)\in B^{1,2,\alpha}_{\mu,\delta}$, then there is $v\in B^{1,2,\alpha}_{\mu,\delta}$ such that 
    \begin{align*}
        \limsup_{j\rightarrow \infty}\Norm{\mathcal{F}(u_j)-v}_{P^{1,2,\alpha'}_{\mu'}}=0.
    \end{align*}
    It follows that $v=\mathcal{F}(u)$, but this is impossible. 
\end{proof}

This allows us to use the Schauder fixed point theorem to conclude that, 

\begin{corollary}
    The map $\mathcal{F}:B^{1,2,\alpha}_{\mu,\delta}\rightarrow B^{1,2,\alpha}_{\mu,\delta}$ has a fixed point.
\end{corollary}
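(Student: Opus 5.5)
The plan is to apply the Schauder fixed point theorem (Theorem 11.1 in \cite{gilbarg1977elliptic}, as stated above) with Banach space $V=P^{1,2,\alpha'}_{\mu'}$, for fixed $1<\mu'<\mu$ and $0<\alpha'<\alpha$, and with $K$ the closure of $B^{1,2,\alpha}_{\mu,\delta}$ in $V$. First, $K$ is nonempty and convex: $B^{1,2,\alpha}_{\mu,\delta}$ contains $0$ and is convex as a ball in a normed space, and the closure of a convex set is convex. Moreover, $K$ is closed in $V$ by construction.

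The main point is to identify $K$ with $B^{1,2,\alpha}_{\mu,\delta}$ itself, or at least to control it, so that $\mathcal{F}$ is defined on $K$. I would show that the closed ball $\{\Vert u\Vert_{P^{1,2,\alpha}_{\mu}}\le\delta\}$ is closed in $P^{1,2,\alpha'}_{\mu'}$. Convergence in $P^{1,2,\alpha'}_{\mu'}$ implies pointwise convergence of $u$, $\nabla u$, $\nabla^2 u$ and $\partial_t u$. Each weighted quantity in the $P^{1,2,\alpha}_{\mu}$ norm, such as $t^\mu|\nabla^j u|$ or the weighted H\"older difference quotients, is a supremum of expressions that are continuous under pointwise convergence. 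Hence the norm is lower semicontinuous, and limits stay in the closed ball. So I would work with the closed ball (replacing $B$ by its closure, which is harmless since the estimates in the previous proposition give $\Vert\mathcal{F}(v)\Vert\le\delta$ for $\Vert v\Vert\le\delta$ after shrinking $\delta$ and $\varepsilon$). This is the step I expect to need the most care, since the paper's ball is open. If necessary, I would run the previous proposition's estimates with non-strict inequality, using the bound $\Vert\mathcal{F}(v)\Vert_{P^{1,2,\alpha}_{\mu}}\le C(\Vert\theta_\varepsilon(\gamma)-\theta_0(\gamma)\Vert+C\delta^2)$ and choosing $\delta$ with $C^2\delta<1/2$ and then $\varepsilon$ with $C\Vert\theta_\varepsilon-\theta_0\Vert<\delta/2$.

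Next, by the preceding proposition, $\mathcal{F}$ maps $K$ into $K$ and is continuous in the $P^{1,2,\alpha'}_{\mu'}$ norm. For the compactness hypothesis, $\mathcal{F}(K)\subset K$, and $K$, being bounded in $P^{1,2,\alpha}_{\mu}$, is precompact in $V$ by Proposition \ref{Schaudercompactness}; being closed, it is compact. So $\mathcal{F}(K)$ lies in the compact subset $K$ of $K$.

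All hypotheses of the Schauder fixed point theorem then hold, giving $u\in K$ with $\mathcal{F}(u)=u$. Since $u=\mathcal{F}(u)$ lies in the image of $\mathcal{F}$, it belongs to $B^{1,2,\alpha}_{\mu,\delta}$, which yields the corollary.
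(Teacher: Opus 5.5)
Your proposal is correct and follows the route the paper intends. You apply Schauder's theorem in $P^{1,2,\alpha'}_{\mu'}$, taking compactness from Proposition \ref{Schaudercompactness} and continuity plus the self-mapping property from the preceding proposition. Your extra step of replacing the open ball by the closed $P^{1,2,\alpha}_{\mu}$-ball supplies the closedness of $K$ that the paper's one-line deduction leaves implicit. That closed ball is closed in $P^{1,2,\alpha'}_{\mu'}$ by lower semicontinuity, and after shrinking $\delta$ and $\varepsilon$ it is mapped by $\mathcal{F}$ strictly into the open ball, so the fixed point lands in $B^{1,2,\alpha}_{\mu,\delta}$.
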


\begin{remark}
    Up until now we have proven that there exists a solution to the equation in $P^{1,2,\alpha}_{\mu}$. It follows from parabolic regularity theory that this solution is actually smooth. 
\end{remark}

\section{LMCF in the Kummer K3}

To conclude this paper, we explain how to use our perturbation result to construct examples of LMCF in the Kummer K3 surface, by perturbing the solutions found by Lotay and Oliveira in the Eguchi--Hanson space. 

The Kummer K3 surface is the smooth complex surface that results from blowing-up the 16 singular points of $T^4/\{\pm 1\}$. Notice that each of the singular points is modeled on the zero point of $\mathbb{R}^4/\{\pm 1\}$, whose desingularization is $T^*\mathbb{S}^2$. Because of this, the blow-up procedure is equivalent to removing a neighborhood of each singular point, and gluing in a neighborhood of the zero section in $T^*\mathbb{S}^2$. 

Through a gluing construction, using the flat metric in $T^4/\{\pm 1\}$ and the Eguchi--Hanson metric in $T^*\mathbb{S}^2$, one can endow the Kummer K3 surface with a one parameter family of Ricci-flat metrics.  This parameter can be thought of as the inverse of the length of a cylindrical region between the Eguchi--Hanson and the torus parts. The idea of the construction is as follows, using a cut-off function one constructs a one-parameter family of K\"ahler metrics $\omega_r$ that are almost Ricci-flat. In the regions modeled on $T^*\mathbb{S}^2$, $\omega_r$ is the Eguchi--Hanson metric, and in the torus region it is the flat metric. Afterwards, one solves the complex Monge--Amp\`ere equation
\begin{align*}
    (\omega_r+i\partial\overline{\partial}\varphi_r)^2=\lambda \Omega\wedge\overline{\Omega}, 
\end{align*}
for a fixed constant $\lambda$, and for $r$ sufficiently small. See Donaldson's paper \cite{donaldson2010calabi} for more details. 

The main takeaway is that one obtains a family of Calabi--Yau structures in the Kummer K3 surface, where the metric, and symplectic structures are changing, but the complex structure is fixed. In the regions modeled on $T^*\mathbb{S}^2$, the zero section is a complex submanifold, in particular $\Omega\vert_{\mathbb{S}^2}=0$.

The Gibbons--Hawking ansatz describes all hyperk\"ahler 4-manifolds admitting a tri-Hamiltonian circle action, this includes the Eguchi--Hanson metrics. Lotay and Oliveira proved the following version of the Thomas--Yau conjecture \cite{lotay2020special} for circle invariant Lagrangians in one of the manifolds arising from the Gibbons--Hawking ansatz. 

\begin{theorem}[See Theorem 1.2 in \cite{lotay2020special}]
    Let $X^4$ be an ALE or ALF hyperk\"ahler 4-manifold arising from the Gibbons--Hawking ansatz and let $L^2\subset X^4$ be a compact, embedded, almost calibrated, circle-invariant Lagrangian. If $L$ is flow stable, then the Lagrangian mean curvature flow starting at $L$ exists for all time and converges smoothly to the unique circle invariant special Lagrangian in its Hamiltonian isotopy class. 
\end{theorem}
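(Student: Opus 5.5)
The plan is to use the circle symmetry to reduce the Lagrangian mean curvature flow to a one-dimensional problem: a weighted curve shortening flow in the base of the Gibbons--Hawking fibration. Write $X^4$ via the Gibbons--Hawking ansatz, with hyperk\"ahler moment map $\pi:X\to\mathbb{R}^3$, harmonic function $V=c+\sum_i \frac{1}{2|x-p_i|}$ and monopole points $p_i$. After a hyperk\"ahler rotation, a circle-invariant Lagrangian for $\omega_1$ projects to a curve $\gamma$ in an affine plane $\{x_1=\text{const}\}$. A compact such Lagrangian $L$ corresponds to an arc joining two monopole points lying in that plane, or to a closed curve avoiding them. I will first verify that $S^1$-invariance and the Lagrangian condition are preserved along the flow, which follows from uniqueness of the flow. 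I will then compute that the LMCF of $L$ is equivalent to an evolution of the form $\partial_t\gamma = V^{-1}\kappa\,\nu$, up to tangential terms. This is the curve shortening flow for the conformal metric $V\,g_{\mathbb{R}^2}$ on the plane. In the same computation I will express the Lagrangian angle $\theta$ of $L$ as the angle of the tangent of $\gamma$ with respect to a fixed direction, so that the almost calibrated condition becomes a bound on the oscillation of the tangent angle.

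Next I will prove a priori estimates for the planar flow. The tangent angle satisfies a parabolic equation of heat type, as $\theta$ does for any LMCF, so by the maximum principle the almost calibrated condition is preserved. The endpoints stay fixed at the monopole points, because the circle fibres collapse there and smoothness of $L$ forces this. Embeddedness of the curve is also preserved, via a Huisken-type distance comparison adapted to the weight $V$. The angle bound says that $\gamma$ stays a graph over the line segment joining its endpoints. This reduces the flow to a scalar quasilinear parabolic equation for a graph function $u(s,t)$ with Dirichlet boundary data. Gradient bounds then come from the angle bound, and higher derivative bounds from standard parabolic (Schauder) theory, away from the monopole points.

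The main obstacle is the analysis near the monopole points. There $V$ blows up and the reduction degenerates, so the scalar equation is singular at the boundary and can also degenerate in the interior if the curve approaches another $p_j$. The \emph{flow stable} hypothesis is exactly what should prevent the curve from hitting an interior monopole point. I would try to show that the region cut out by the straight special Lagrangian segment and the initial curve contains no $p_j$, and use avoidance/comparison with that segment (a barrier) to show this persists. At the endpoints I will work upstairs on $X$, where $L$ is smooth, to obtain curvature bounds by a blow-up argument. A finite-time singularity would rescale to a zero-Maslov, almost calibrated, circle-invariant self-shrinker or translator in $\mathbb{C}^2$ or in the Eguchi--Hanson model, and such solitons can be excluded using the angle bound.

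With uniform $C^k$ bounds in hand, long-time existence follows. Convergence comes from the monotone decrease of the weighted length $\int_\gamma V\,ds$, which equals the area of $L$. Together with compactness, this gives subsequential convergence to a weighted geodesic, i.e.\ a special Lagrangian, which is the straight segment. Uniqueness of that limit in the Hamiltonian isotopy class then upgrades this to smooth convergence of the whole flow, exponentially fast by Proposition~\ref{expconvergence}.
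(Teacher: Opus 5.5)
The paper does not prove this statement; it quotes it from Lotay--Oliveira, so I compare your proposal with the argument there. Your reduction is correct: $\partial_t\gamma=V^{-1}\kappa\nu$, the Lagrangian angle is the tangent angle of $\gamma$, the maximum principle preserves the angle range, and the endpoints stay at the monopole points $p_\pm$.

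The genuine gap is in the step that carries the theorem: keeping $\gamma_t$ away from the other monopole points. Comparison with the stationary chord only controls sign and intersection structure; a curve lying on one side of a line stays on that side. It does not keep $\gamma_t$ inside the region between $\gamma_0$ and the chord. Arcs that are convex away from the chord move outward, as the flanks of a spreading bump do. So nothing in your argument stops $\gamma_t$ from reaching a monopole $q$ that lies just outside the initial region.

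The missing idea uses what you already have. Suppose $\theta\in[\theta_{\min},\theta_{\max}]$ for all $t$, with $\theta_{\max}-\theta_{\min}<\pi$, and let $C$ be the closed cone spanned by the directions $e^{i\theta}$ with $\theta$ in this interval. Integrating the unit tangent from either fixed endpoint gives $\gamma_t\subset(p_-+C)\cap(p_+-C)$. This is a compact parallelogram independent of $t$. This is the region the flow-stability hypothesis is designed to control: it excludes monopole points other than $p_\pm$ from it, i.e.\ there is no $q$ with both $q-p_-$ and $p_+-q$ in $C$. You then get a lower bound on the distance to the other monopoles that is uniform in time. You need this bound both for uniform-in-time estimates and to know that the limiting segment is a smooth special Lagrangian.

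Your variational bookkeeping is also wrong. For $g=V\,g_{\mathbb{R}^3}+V^{-1}\eta^2$ the circle fibre has length $2\pi V^{-1/2}$ and horizontal lengths scale by $V^{1/2}$. So the area of $\pi^{-1}(\gamma)$ is $2\pi$ times the \emph{Euclidean} length of $\gamma$. Your flow $\partial_t\gamma=V^{-1}\kappa\nu$ is the gradient flow of Euclidean length for the $L^2(V\,ds)$ inner product. It is not curve shortening for $V g_{\mathbb{R}^2}$, which carries the extra term $-\tfrac12V^{-1}\partial_\nu\log V$. Hence $\int_\gamma V\,ds$ is not monotone, and its critical points (geodesics of $V g_{\mathbb{R}^2}$) are in general not straight, so your convergence paragraph contradicts itself. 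Use Euclidean length instead; its critical points with fixed endpoints are straight segments.

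Two smaller points. First, the angle bound makes $\gamma_t$ a graph over a line in the middle direction of $C$, not necessarily over the chord. Second, the angle bound alone does not exclude blow-ups. A pair of transverse special Lagrangian planes with phases in the allowed range is compatible with it, and this is exactly the interior-monopole singularity. At the endpoints, the graph property together with circle invariance forces the tangent flow to be a single multiplicity-one plane; only then does White's regularity theorem apply.
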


In the case of the Eguchi--Hanson metric, the flow stability condition is always satisfied (see Definition 6.2, and Lemma 6.8 in \cite{lotay2020special}), therefore:

\begin{corollary}\label{LMCF in EH}
    Let $T^*\mathbb{S}^2$ be endowed with the Eguchi--Hanson metric, and choose a compatible Calabi--Yau structure on $T^*\mathbb{S}^2$ so that the zero section is special Lagrangian. Let $L\subset T^*\mathbb{S}^2$ be a compact, embedded, zero Maslov, circle-invariant Lagrangian. If $L$ is almost calibrated then the Lagrangian mean curvature flow starting at $L$ exists for all time and converges to $\mathbb{S}^2$. 
\end{corollary}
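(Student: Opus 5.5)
The corollary follows from the quoted Theorem 1.2 of \cite{lotay2020special}. The plan is to check its hypotheses in the present setting and then identify the limit. The Eguchi--Hanson space is the Gibbons--Hawking space with two collinear centres, so it is an ALE hyperk\"ahler $4$-manifold of the type covered by that theorem. The Lagrangian $L$ is assumed compact, embedded, circle-invariant and almost calibrated. Hence the only hypothesis left to verify is flow stability.

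For flow stability I will appeal to Definition 6.2 and Lemma 6.8 of \cite{lotay2020special}, as the remark before the corollary indicates. Circle-invariant Lagrangians correspond to curves in the base $\mathbb{R}^3$, which may be taken in a fixed plane containing the centres. Instability would require the special Lagrangian in the class to decompose along the centres into pieces of different phases. With only two centres there is a single segment joining them, so no such decomposition is available. Lemma 6.8 then gives flow stability for every almost calibrated circle-invariant Lagrangian.

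Theorem 1.2 now yields long-time existence of the flow and smooth convergence to the unique circle-invariant special Lagrangian $L_\infty$ in the Hamiltonian isotopy class of $L$. It remains to show $L_\infty=\mathbb{S}^2$, and this is the step where care is needed. The zero section is circle-invariant, since it projects to the segment between the centres, and it is special Lagrangian for the chosen Calabi--Yau structure. By the uniqueness clause it therefore suffices to show that $L$ is Hamiltonian isotopic to $\mathbb{S}^2$.

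For this, note that $L$ is a compact embedded Lagrangian, so it must be the circle bundle over an arc joining the two centres. Such an $L$ is a sphere in the class of the zero section, since $H_2(T^*\mathbb{S}^2)=\mathbb{Z}$ and a closed circle-invariant curve that avoids the centres would bound a torus instead. The zero Maslov and almost calibrated hypotheses ensure the phase matches, i.e.\ that we are in the same class with the correct orientation. Lagrangian spheres in $T^*\mathbb{S}^2$ are Hamiltonian isotopic to the zero section up to orientation (Hind). Alternatively, one can write an explicit isotopy of arcs with fixed endpoints at the centres, which generates a Hamiltonian isotopy because the relevant flux vanishes: $H^1(\mathbb{S}^2)=0$. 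Uniqueness then gives $L_\infty=\mathbb{S}^2$ and finishes the proof.
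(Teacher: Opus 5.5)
Your main line is the same as the paper's. The Eguchi--Hanson space is an ALE Gibbons--Hawking space with two centres, flow stability holds by Definition 6.2 and Lemma 6.8 of Lotay--Oliveira, and their Theorem 1.2 then gives long-time existence and smooth convergence. The paper stops there and leaves implicit that the limiting circle-invariant special Lagrangian is the zero section. Your final step makes this explicit and is a useful addition, but one justification in it is wrong.

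You assert that a compact embedded circle-invariant Lagrangian must lie over an arc joining the two centres, citing $H_2(T^*\mathbb{S}^2)=\mathbb{Z}$. Homology does not exclude tori. Let $m$ be the hyperk\"ahler moment map and $\omega_a$ the K\"ahler form. Then $m^{-1}(\gamma)$ is a compact embedded circle-invariant Lagrangian torus for every closed embedded curve $\gamma$ that avoids the centres and lies in a plane orthogonal to $a$.

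What rules these tori out is the hypothesis on the angle. Take $\Omega=\omega_b+i\omega_c$ and evaluate it on the horizontal lift of $\gamma'$ and the circle generator. This shows that the Lagrangian angle of $m^{-1}(\gamma)$ is, up to sign and an additive constant, the angle of the tangent vector of the planar curve $\gamma$. Along a closed embedded curve this angle has degree $\pm1$, so such tori are neither zero Maslov nor almost calibrated. That is the actual role of these hypotheses, rather than fixing an orientation.

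The same observation identifies the limit more directly than Hind's theorem or the isotopy of arcs. The limit provided by Lotay--Oliveira is a compact circle-invariant special Lagrangian. It is therefore the preimage of a curve of constant tangent angle with endpoints at centres, namely the straight segment between the two centres. That preimage is exactly the zero section.
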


We can use our perturbation result to study LMCF in the regions of the Kummer K3 surface that are modeled on the Eguchi--Hanson space.

\begin{theorem}\label{Maintheorem1Tech}
    Consider the Kummer K3 surface $(K,J)$. There is a one parameter family of Calabi--Yau structures $(K,J,\omega_{\varepsilon},\Omega)$. Moreover, there exists an open neighborhood $U\subset T^*{\mathbb{S}^2}$ of the zero section, and sixteen neighborhoods $U_i\subset K$ such that 
    \begin{align*}
        (U_i, \varepsilon^{-2}\omega_{\varepsilon},\Omega)\rightarrow (U,\omega_{EH},\Omega), \text{ as } \varepsilon\rightarrow 0.
    \end{align*}
    With respect to the structures induced in $U_i$ by $\varepsilon^{-2}\omega_{\varepsilon}$, the zero section is a complex submanifold. 
    
    After a hyperk\"ahler rotation, in $(U, Im\Omega, \omega_{EH}+iRe\Omega)$ there exist circle invariant LMCF that converge to the zero section. If the initial condition is graphical over the zero section, then for small enough $\varepsilon>0$ the LMCF can be perturbed to a LMCF in $(U, Im\Omega,\varepsilon^{-2}\omega_{\varepsilon}+iRe\Omega)$, that also converges to the zero section.
\end{theorem}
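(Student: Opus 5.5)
The plan is to verify that the situation fits the hypotheses of Theorem \ref{deformationtheo}, after a hyperk\"ahler rotation that puts us in the fixed-symplectic-form, varying-$\Omega$ setting used there.

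\textbf{Step 1: the family of structures on $K$.} I would take the family $(K,J,\omega_{\varepsilon},\Omega)$ from the gluing construction (Donaldson \cite{donaldson2010calabi}). Here $\omega_\varepsilon=\omega_r+i\partial\overline{\partial}\varphi_r$ with $r\sim\varepsilon$. I then fix the sixteen neighbourhoods $U_i$ of the exceptional curves and identify each with a fixed neighbourhood $U$ of the zero section in $T^*\mathbb{S}^2$ via the scaling that turns the rescaled model $\varepsilon^{-2}\omega_r$ into $\omega_{EH}$ on $U$. The convergence $(U_i,\varepsilon^{-2}\omega_\varepsilon,\Omega)\to(U,\omega_{EH},\Omega)$ should follow from the fact that the Monge--Amp\`ere correction $\varepsilon^{-2}i\partial\overline{\partial}\varphi_r$ tends to zero smoothly on compact subsets of the Eguchi--Hanson region. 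I will assume Donaldson's estimates give this in every $C^k$; the holomorphic form is normalised so that the Calabi--Yau relation holds at each scale. Because $J$ is fixed and the exceptional curves are holomorphic, the zero section is a complex curve for every $\varepsilon$, so $\Omega|_{\mathbb{S}^2}=0$.

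\textbf{Step 2: hyperk\"ahler rotation.} Set $\tilde\omega=\operatorname{Im}\Omega$ and $\tilde\Omega_\varepsilon=\varepsilon^{-2}\omega_\varepsilon+i\operatorname{Re}\Omega$ on $U$. Since both metrics are hyperk\"ahler in real dimension four, each triple $(\varepsilon^{-2}\omega_\varepsilon,\operatorname{Re}\Omega,\operatorname{Im}\Omega)$ is hyperk\"ahler. The rotated pair is therefore a Calabi--Yau structure, after possibly adjusting constants. The symplectic form $\tilde\omega$ is independent of $\varepsilon$, which is exactly the setting of Theorem \ref{deformationtheo}, and the family depends smoothly on $\varepsilon\in[0,\varepsilon_0)$ by Step 1. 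The zero section is Lagrangian for $\tilde\omega$ because $\Omega|_{\mathbb{S}^2}=0$. It is special Lagrangian for every $\varepsilon$ because $\operatorname{Im}\tilde\Omega_\varepsilon|_{\mathbb{S}^2}=\operatorname{Re}\Omega|_{\mathbb{S}^2}=0$.

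\textbf{Step 3: the flows.} Corollary \ref{LMCF in EH}, applied in $(U,\tilde\omega,\tilde\Omega_0)$, gives circle-invariant LMCFs that converge to the zero section. Since $\mathbb{S}^2$ is simply connected, every closed $1$-form is exact, so any such flow is zero Maslov. If the initial condition is graphical over the zero section, then I need the whole flow to stay graphical inside the Lagrangian neighbourhood $U$. My plan is to get this from smooth convergence: for large times the flow is $C^1$-close to $\mathbb{S}^2$, hence graphical. For the finite initial interval I would argue using circle invariance and the curve-shortening picture of Lotay--Oliveira, or simply add it as a hypothesis. With graphicality in hand, Theorem \ref{deformationtheo} gives the perturbed flow in $(U,\tilde\omega,\tilde\Omega_\varepsilon)$ converging to the zero section. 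One further check is that the perturbed flow stays inside $U_i$, so it is a flow in $K$; this follows because $u$ is small in $P^{1,2,\alpha}_\mu$, hence $du$ is uniformly small.

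\textbf{Main obstacle.} The hardest step is the smooth dependence on $\varepsilon$ up to $\varepsilon=0$, with uniform control of the rescaled Monge--Amp\`ere potential on $U$. If this is not already in \cite{donaldson2010calabi}, I would first try weighted estimates for $\varphi_r$ on the neck region.
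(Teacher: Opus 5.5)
Your proposal follows essentially the same route as the paper: Donaldson's gluing construction with $J$ fixed, the exceptional spheres being holomorphic and hence special Lagrangian for every $\varepsilon$ after the hyperk\"ahler rotation to the fixed symplectic form $\mathrm{Im}\,\Omega$, Corollary \ref{LMCF in EH} for the Eguchi--Hanson flows, and then Theorem \ref{deformationtheo}. The paper does not treat the convergence of the rescaled structures as an obstacle; it simply cites \cite{foscolo2019alf} for it. Your other concern is legitimate and is not addressed in the paper's proof either: Theorem \ref{deformationtheo} needs the whole flow, not just its initial condition, to be graphical and to stay in $U$. On that point your proposal is, if anything, more careful than the paper's argument, but you have not closed it.
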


\begin{proof}

As mentioned above, the existence of the Calabi--Yau structures can be proven by a gluing construction as in \cite{donaldson2010calabi}. The convergence as $\varepsilon\rightarrow 0$ of the rescaled structures is proven in \cite{foscolo2019alf}. Notice that the zero section is a complex submanifold of the Eguchi--Hanson space. Since the gluing construction preserves the complex structure, the copy of the zero section is also a complex submanifold in $(K,J,\omega_{\varepsilon},\Omega)$ for every $\varepsilon>0$. After the hyperk\"ahler rotation it becomes a special Lagrangian. 

By Corollary \ref{LMCF in EH} we know that an LMCF in the Eguchi--Hanson space that starts at an almost-calibrated, circle invariant Lagrangian, will converge to the zero section of $T^*\mathbb{S}^2$. 

If the initial condition of the previously mentioned LMCF is graphical over the zero section, then we can use our perturbation result, Theorem \ref{deformationtheo}, to obtain an LMCF in $(U, Im\Omega,\varepsilon^{-2}\omega_{\varepsilon}+iRe\Omega)$, that converges to the zero section. 
\end{proof}

\bibliographystyle{acm}
\bibliography{bibliografia}

\end{document}